\documentclass[10pt]{amsart}

\usepackage{amsthm, amsfonts, amssymb, color}
 \usepackage{mathrsfs}
\usepackage{amsmath}
 \usepackage{amstext, amsxtra}
  \usepackage{txfonts}
 \usepackage[colorlinks, linkcolor=black, citecolor=blue, pagebackref, hypertexnames=false]{hyperref}
\usepackage{graphicx}
\usepackage{overpic}

 \allowdisplaybreaks
\setlength\textheight{44cc} \setlength\textwidth{30cc}
\setlength\topmargin{0in} \setlength\parskip{5pt}

\widowpenalty=10000

\newtheorem{cor}{\hskip\parindent {Corollary}}[section]

\newtheorem{theorem}{Theorem}[section]

\newtheorem{lemma}[theorem]{Lemma}
\newtheorem{definition}[theorem]{Definition}

\newtheorem{remark}[theorem]{Remark}

\title[Navier-Stokes equations, symmetry and Herz spaces]{
Navier-Stokes equations, symmetric and uniform analytic solutions in phase space}
\author{Qixiang Yang}

\address {Qixiang Yang, School of Mathematics and Statistics, Wuhan University, Wuhan, 430072, P. R. China}
\email{qxyang@whu.edu.cn}
\date{\today}
\subjclass[2010]{ 35Q30; 76D03; 42B35}
\keywords{Navier-Stokes equations;  Symmetry about the component of velocity field;
Symmetry and anti-symmetry for the independent variable of velocity field;
Uniform analyticity; Fourier-Herz space.}

\begin{document}

\maketitle
\begin{abstract}
For incompressible Navier-Stokes equations, Necas-Ruzicka-Sverak proved that
self-similar solution has to be zero in 1996.
Further, Yang-Yang-Wu find symmetry property plays an important role in the study of ill-posedness.
In this paper, we consider two types of symmetry property.
We search special symmetric and uniform analytic functions to approach the solution
and establish global uniform analytic and symmetric solution
with initial value in general symmetric Fourier-Herz space.
For two kinds of symmetry of initial data, we prove that the solution has also the same symmetric structure.
Further, we prove that the uniform analyticity is equivalent to the convolution inequality on Herz spaces.
By these ways, we can use symmetric and uniform analytic functions to approximate  the solution.
\end{abstract}


\section{Introduction}
\setcounter{equation}{0}
We consider the incompressible Navier-Stokes equations on the half-space $\mathbb{R}_{+}\times \mathbb{R}^{3}$,
\begin{equation}\label{1.1}
\left\{ \begin{aligned}
  &\partial_t u -\Delta u+u\cdot\nabla u-\nabla p=0,\;\;(t,x) \in \mathbb{R}_{+}\times \mathbb{R}^{3}, \\
  &\nabla\cdot u=0,\\
  u&(0, x)=u_{0}(x).
 \end{aligned}\right.
 \end{equation}
Since Kato-Fujita find mild solution for Navier-Stokes equations (\ref{1.1}), there are many works which considered the mild solution.
See  \cite{Canj, Canb, Ger, Gig, Kat, KatP, Koc, Lei, Lem, LiX, LiY1, LiY2,  Lin, Tay, Wu1, Wu2, Wu3, Wu4, Xia1, Xia2, YanL, YanY}.
Among all the known results, Koch-Tataru proved the well-posedness of $BMO^{-1}$ in 2001.
Giga-Inui-Mahalov-Saal \cite{GIMS} proved the well-posedness of $P^{-1}_{1,1}$ in 2008.
For well-posedness, $BMO^{-1}$ and $P^{-1}_{1,1}$ are the biggest critical  initial spaces in the real version
and in the Fourier transform version respectively.
So we try to consider the above equations in some new point and search special structure solutions.

In 1996, Necas-Ruzicka-Sverak \cite{NRS} proved the only possible self-similar solutions are zero.
If a solution blow up in finite time, then
the initial data and the solution should both have the same special structure.
Hence the study of special structure both initial data and solution have 
will contribute to the study of Navier-Stokes equations \eqref{1.1}.
Further, {\bf Yang-Yang-Wu \cite{YYW} found that the symmetry property plays an important role in the study of ill-posedness.}
In this paper, we search {\bf symmetric and uniform analytic solution}.
Because of the structure of equations \eqref{1.1}, we found {\bf many kinds of symmetric initial data can not produce symmetric solution.}
But we can prove still, if $u_0(x)$ satisfies symmetry property $X_{m}(m=1,2,3)$ and belongs to some critical Fourier Herz space $P^{2-\frac{3}{p}}_{p,q}$,
then there exists global solution $u(t,x)$ such that $u(t,x)$ satisfies still symmetry property $X_{m}(m=1,2,3)$ and
multiply by an exponential growth factor, its uniform norm $U_{e}(\xi)=\sup\limits_{t>0} e^{t^{\frac{1}{2}} |\xi|} | \hat{u}(t,\xi)|$
for $t$  belongs to Herz space $H^{2-\frac{3}{p}}_{p,q}$. Such uniform norm for $t$ means uniform analyticity.

Herz space $H^{\alpha}_{p,q}$ has been studied heavily in harmonic analysis. See Lu-Yang \cite{Lu-Yang} and their references therein.
Phase space $P^{\alpha}_{p,q}$ is defined by the functions whose Fourier
transform belong to the Herz space $H^{\alpha}_{p,q}$. For $p=q$, Herz space becomes the weighted Lebesgue space $W_{\alpha,p}$.
\begin{definition}\label{def1.1}
(i) $f(\xi)\in H^{\alpha}_{p,q}$, if $\{ \sum\limits_{j\in \mathbb{Z}} 2^{qj\alpha} (\displaystyle\int_{2^{j}\leq |\xi|\leq 2^{j+1}} |f(\xi)|^{p}d\xi)^{\frac{q}{p}}\}^{\frac{1}{q}}<\infty$.

(ii) $f(x)\in P^{\alpha}_{p,q}$, if $\hat{f}\in H^{\alpha}_{p,q}$.
\end{definition}
If $\alpha=2-\frac{3}{p}$, then the initial data space $P^{\alpha}_{p,q}$ becomes critical phase spaces.

There are two type of symmetry of a vector field $u$:
(i) symmetry about the component of velocity field.
(ii) symmetry and anti-symmetry for the independent variable of velocity field.
Divergence zero property and non-linearity restrict the possibility of symmetric solutions.
We found only the following two kinds of symmetric initial values can produce symmetric solution up to this paper.
\begin{definition}
(i) $u$ satisfies the symmetry property $X_1$, if
\begin{equation}\label{X1}
\hat{u}_1(\xi_1,\xi_2, \xi_3) = \hat{u}_2(\xi_2, \xi_1,\xi_3).
\end{equation}

(ii) $u$ satisfies the symmetry property $X_2$, if
$\hat{u}$ has the same symmetry as the following polynomial function
\begin{equation} \label{vs}
\left ( \begin{aligned}
\xi_2\xi_3\,&+&\!\! i\xi_1 \\
\xi_1\xi_3\,&+&\!\! i\xi_2\\
\xi_1\xi_2\,&+&\!\! i\xi_3
 \end{aligned}\right )
\end{equation}

(iii) We say $u$ satisfies symmetry property $X_3$, if $u$ satisfies both $X_1$ and $X_2$.

\end{definition}

For equations \eqref{1.1}, we have the following symmetric and uniform analytic solution result:
\begin{theorem}\label{th1.3}
Given $p>\frac{3}{2},1<q<\infty, \alpha=2-\frac{3}{p}$ and $m=1,2,3$.
Given divergence zero $u_0$ satisfies symmetry property $X_{m}$.
There exists constant $C$ such that, if $\|\hat{u}_{0}\|_{(H^{\alpha}_{p,q})^{3}}\leq C$,
then there exists a unique solution $u(t,x)$
satisfies symmetry property $X_{m}$ such that
\begin{equation}\label{11.1}
\sup_{t>0}e^{t^{\frac{1}{2}}|\xi|}|\hat{u}(t, \xi)|\in H^{\alpha}_{p,q}.
\end{equation}
\end{theorem}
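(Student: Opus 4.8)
The plan is to solve \eqref{1.1} by the standard contraction mapping scheme, but in a function space whose very norm encodes the uniform analyticity \eqref{11.1}, so that analyticity comes out for free once a fixed point is found. First I would eliminate the pressure with the Leray projection $\mathbb{P}=I-\nabla\Delta^{-1}\nabla\cdot$ and rewrite \eqref{1.1} in mild (Duhamel) form
\begin{equation}
u(t)=e^{t\Delta}u_0+B(u,u)(t),\qquad B(u,v)(t)=-\int_0^t e^{(t-s)\Delta}\,\mathbb{P}\,\nabla\cdot\big(u\otimes v\big)(s)\wrt s .
\end{equation}
On the Fourier side $e^{t\Delta}$ is multiplication by $e^{-t|\xi|^2}$, $\mathbb{P}(\xi)=I-\xi\otimes\xi/|\xi|^2$ is a bounded $0$-homogeneous matrix symbol, and $\nabla\cdot$ contributes a factor of order $|\xi|$. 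I would then define the solution space $\mathcal X$ by the single norm
\begin{equation}
\|u\|_{\mathcal X}:=\Big\|\,\sup_{t>0}e^{t^{1/2}|\xi|}\,|\hat u(t,\xi)|\,\Big\|_{H^{\alpha}_{p,q}},
\end{equation}
so that membership in $\mathcal X$ is exactly the conclusion \eqref{11.1}.

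The linear bound $\|e^{t\Delta}u_0\|_{\mathcal X}\lesssim \|\hat u_0\|_{H^\alpha_{p,q}}$ follows from the elementary pointwise estimate $\sup_{t>0}e^{t^{1/2}|\xi|-t|\xi|^2}\le e^{1/4}$. The heart of the matter is the bilinear estimate, and here I would use the Gevrey absorption trick responsible for the word ``equivalent'' in the abstract. Writing $\mathcal U(\xi)=\sup_{s>0}e^{s^{1/2}|\xi|}|\hat u(s,\xi)|$ and similarly $\mathcal V$, I insert $1=e^{t^{1/2}|\xi|}e^{-s^{1/2}|\xi-\eta|}e^{-s^{1/2}|\eta|}\cdot e^{s^{1/2}|\xi-\eta|}e^{s^{1/2}|\eta|}$ inside the convolution defining $\widehat{B(u,v)}$ and use the triangle inequality $|\xi|\le|\xi-\eta|+|\eta|$ together with $s\le t$. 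The two ``good'' factors are absorbed into $\mathcal U(\xi-\eta)$ and $\mathcal V(\eta)$, while the leftover heat/analytic factor is controlled by the scalar inequality
\begin{equation}
\sup_{t>s>0}\exp\Big((t^{1/2}-s^{1/2})|\xi|-\tfrac12(t-s)|\xi|^2\Big)\le e^{1/2},
\end{equation}
which one checks by optimizing in $|\xi|$ after the substitution $\tau=t^{1/2},\ \sigma=s^{1/2}$. Carrying out the remaining $s$-integral of $e^{-(t-s)|\xi|^2/2}|\xi|$ gains a factor $|\xi|^{-1}$, and the whole bilinear term collapses to
\begin{equation}
\sup_{t>0}e^{t^{1/2}|\xi|}\,|\widehat{B(u,v)}(t,\xi)|\ \lesssim\ |\xi|^{-1}\big(\mathcal U*\mathcal V\big)(\xi).
\end{equation}

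Thus the bilinear estimate $\|B(u,v)\|_{\mathcal X}\lesssim\|u\|_{\mathcal X}\|v\|_{\mathcal X}$ reduces exactly to the convolution inequality on Herz spaces
\begin{equation}
\big\||\xi|^{-1}(f*g)\big\|_{H^{\alpha}_{p,q}}\ \lesssim\ \|f\|_{H^{\alpha}_{p,q}}\,\|g\|_{H^{\alpha}_{p,q}} ,
\end{equation}
which I expect to be the main obstacle. Proving it is a scaling-critical, Young-type estimate on dyadic annuli: a quick check of the homogeneities shows the two sides balance precisely when $\alpha=2-\tfrac3p$, that $p>\tfrac32$ is what forces $\alpha>0$ (positive smoothness), and that $1<q<\infty$ is the admissible range for the $\ell^q$ summation over scales. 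The technical work is to split $f$ and $g$ into dyadic annular pieces, bound $\|f_j*g_k\|_{L^p}$ by Young's inequality, and sum the resulting bilinear array over scales via a paraproduct decomposition (low--high, high--low, high--high); convergence of the $\ell^q$ sums is exactly what the constraints $p>\tfrac32$, $1<q<\infty$ guarantee. Once this inequality is in hand the scheme closes: there is a constant $C$ so that if $\|\hat u_0\|_{(H^\alpha_{p,q})^3}\le C$, the map $u\mapsto e^{t\Delta}u_0+B(u,u)$ is a contraction on a small ball of $\mathcal X$, yielding a unique solution, and membership in $\mathcal X$ is exactly \eqref{11.1}.

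Finally I would establish the symmetry claim. The key observation is that each symmetry class $X_m$ is a closed linear subspace of $\mathcal X$ that is invariant under every operation in the iteration: the heat symbol $e^{-t|\xi|^2}$ is radial and hence commutes with the coordinate permutations and reflections defining $X_1,X_2,X_3$, and one checks directly that the Leray symbol $\mathbb{P}(\xi)$ and the quadratic map $\nabla\cdot(u\otimes v)$ intertwine with these same transformations. This compatibility is precisely what fails for the ``many kinds of symmetric initial data'' mentioned in the introduction, so it must be verified for each generator of the relevant symmetry group separately; for $X_2$ it amounts to confirming that the nonlinearity reproduces the sign/permutation pattern of the polynomial \eqref{vs}. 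Granting this, the first iterate $e^{t\Delta}u_0$ inherits $X_m$ from $u_0$, every subsequent iterate stays in the subspace, and since the subspace is closed the fixed point $u$ satisfies $X_m$ as well.
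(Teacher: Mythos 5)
Your proposal follows essentially the same route as the paper: the same uniform-analyticity norm, the same Gevrey absorption inequality $e^{-(t-s)|\xi|^2}e^{-s^{1/2}|\xi-\eta|}e^{-s^{1/2}|\eta|}\lesssim e^{-t^{1/2}|\xi|}e^{-\frac12(t-s)|\xi|^2}$ reducing the bilinear estimate to the Herz-space convolution inequality $\||\xi|^{-1}(f*g)\|_{H^{\alpha}_{p,q}}\lesssim\|f\|_{H^{\alpha}_{p,q}}\|g\|_{H^{\alpha}_{p,q}}$, the same dyadic low/high/comparable splitting to prove it, and the same term-by-term verification that the Fourier-side nonlinearity preserves $X_1$ and $X_2$ before passing to the limit of the Picard iterates. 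The only differences are cosmetic (your constants $e^{1/4}$, $e^{1/2}$ versus the paper's $e^2$, and a more compressed sketch of the symmetry verification, which the paper carries out explicitly via the decomposition of $A_k$ and $A_0$).
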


We will prove this theorem by constructing symmetric and uniform analytic functions to approach the solution.
{\bf Symmetry property plays an important role in the study of ill-posedness in \cite{YYW}.}
For the two types of symmetry, we note
\begin{remark}
(i) 
As to the symmetry on the component of velocity field,
the symmetry property $X_1$ is a generalization of sub-radial property
which has been considered in Abidi \cite{Abi} and Abidi-Zhang \cite{AbiZ}.
See also Remark \ref{re:2.2}.
$X_1$  can produce the vorticity of the third axi to be zero.

(ii) The independent variables of vector field $u$ have symmetry property means,
the real part and imaginary part of each component of Fourier transform all have axi-symmetry or anti-axi-symmetry.
{\bf There are 262144 kinds of possible symmetry property for vector field.}
Divergence zero and non-linearity restrict the possibility of symmetric solution.
{\bf I have taken a long time to check many kinds of symmetry properties and finally,
we found symmetry property $X_2$ can produce symmetric solution.}
See both subsection \ref{sec:5.1.1} and remark \ref{rem:2.7}.
{\bf I did not found any paper which has considered such symmetric solution.}
Further, I have found in another paper \cite{Yang}, 
there is only one kind of symmetry real valued initial data $u_0(x)$ 
can generate symmetric solution.

\end{remark}

Not considering the symmetry of solution, some people have considered the wellposedness for initial data in Fourier-Herz spaces
$P^{\alpha}_{p,q}$:

\begin{remark} (i)  The following work considered the well-posedness without considering any analyticity:
Cannone-Karch \cite{CK} considered the case $P^{2}_{\infty,\infty}$.
Giga-Inui-Mahalov-Saal \cite{GIMS} and
Lei-Lin \cite{Lei} considered the case $P^{-1}_{1,1}$. Cannone-Wu \cite{CG} considered the case $P^{-1}_{1,q}$. Further,
Xiao-Chen-Fan-Zhou \cite{XCFZ} considered the general case.

(ii) As to the analyticity, see \cite{Ger} and \cite{Miu}. Biswas considered Fourier version weighted Lebesgue space $(W_{\alpha,p})^{3}$
which is the particular Fourier-Herz space $(P^{\alpha}_{ p, p})^{3}$. Applying Theorem 3.2 in \cite{Bis} to the equations \eqref{1.1},
there exists constant $C$ such that $\|u_{0}\|_{(W_{\alpha, p})^{3}}\leq C$, then there exists a unique solution
$u(t, x)\in C([0, \infty), (W_{\alpha , p})^{3})$ satisfying
\begin{align}\label{1.5}
\sup_{0<t<\infty}\|\hat{u}\|_{G_{\theta_{0}}(t)}\leq 2C,
\end{align}
where $\theta_{0}=\frac{3}{p'}-1$ and
\begin{align*}
\|v\|_{G_{\theta}(t)}=\Big\{\displaystyle\int_{\mathcal{R}^{n}}e^{t^{\frac{1}{2 }}p|\xi|}\|\xi|^{\theta p}|v(t, \xi)|^{p}d\xi\Big\}^{\frac{1}{p}}.
\end{align*}
{\bf Since \eqref{11.1} is stronger than \eqref{1.5}, so our result naturally implies the above result.
Further, their methods can not be applied to consider the uniform analyticity.}

(iii) As to the uniform analyticity,  Le Jan-Sznitman \cite{Lej} consider the case $ p=q=\infty$ by  applying convolution inequality.
For  $p=q=2$, which can be found at Chapter 24 in the book of Lemari$\acute{\rm e}$ 2002 \cite{Lem}, whose proof is made by applying paraproduct and space $\dot{B}_{\infty}^{-1, \infty}$.
{\bf In this paper, we can consider uniform analytic solution for general $p$ and $q$. Our main idea is convolution inequality on Herz spaces.}

(iv) {\bf We guess $P^{2-\frac{3}{p}}_{p,\infty}(p>\frac{3}{2})$ is the biggest critical initial data spaces with uniform analytic solution.}
But we know, without considering analyticity, 
$P^{-1}_{1,1}$ in \cite{GIMS} is the biggest initial spaces
among all the critical phase spaces $P^{\alpha}_{p,q}$.
\end{remark}

The structure of the rest paper is as follows: In section 2, we introduce some history on
mild solution and present a result on uniform analytic solution.
In section 3, we show how to transform the wellposedness result to the convolution inequality.
In section 4, we prove the convolution inequality on Herz spaces.
In section 5, we present first how divergence zero exerts influence on symmetry property.
Then we show how the iterative algorithm (\ref{2.6}) inherits the two kinds of symmetry properties.
At the end of section 5, we narrate a corollary on how the symmetry property
reduce the number of induction function in  the iterative algorithm (\ref{2.6}).
In the last section 6, we prove our main theorem on symmetric and uniform analytic solution.

\vskip0.5cm

{\bf Acknowledgement}\ \
(i) Professor Y. Giga has given me some precious suggestions and also told me some development helpful to this paper.
I would like express my sincere thanks to him.

(ii) This paper is financially supported by the
China National Science Foundation (No.11571261).

\section{Mild solution and uniform analyticity}
\setcounter{equation}{0}

The notion of mild solution was pioneered by Kato-Fujita \cite{KatF} in 1960s.
Denote
\begin{align}\label{1.2}
\mathbb{P}\nabla(u\otimes u)=\sum_{l}  \partial _{l}(u_{l}u)-(-\Delta)^{-1} \sum_{l, l'} \partial_{l} \partial_{l'}\nabla(u_{l}u_{l'});
\end{align}
A solution of the above Cauchy problem is then obtained via the integral equation
\begin{equation}\label{1.3}
\begin{split}
u(t, x)&=e^{t\Delta}u_{0}(x)-B(u, u)(t, x);\\
B(u, u)(t, x)&\equiv\displaystyle\int_{0}^{t}e^{ (t-s) \Delta }\mathbb{P}\nabla(u\otimes u)ds,
\end{split}
\end{equation}
which can be solved by a fixed-point method whenever the convergence is suitably defined in some function space.
Denote
\begin{align}\label{1.4}
\begin{array}{cccl}
u^{0}(t,x)&=& e^{t\Delta} u_0\,\, ;& \\
u^{\tau+1}(t,x)&=&  u^{0}(t,x) - B (u^{\tau}, u^{\tau})(t,x),&\forall \tau=0,1,2,\cdots.
\end{array}
\end{align}
For $u_0\in X_0^{3}$, if there exists $X^{3}$ such that $u^{0}(t,x)\in X^{3}$
and iterative quantity $u^{\tau}$ converge to some function $u(t,x)\in X^{3}$, then $u(t,x)$ is the solution of \eqref{1.3}.  $u(t,x)$
is the mild solutions of \eqref{1.1}.
The initial data space $X_0^{3}$  is called to be critical for \eqref{1.1} in the sense that the space is invariant under the scaling
$$
u_{0}^{\lambda}=\lambda u_{0}(\lambda x).
$$
Note that \eqref{1.1} is invariant under the scaling
$$
u_{\lambda}=\lambda u(\lambda^{2}t, \lambda x)
$$
and
$$
p_{\lambda}(t, x)=\lambda^{2}p(\lambda^{2}t, \lambda x).
$$
Hence critical spaces occupied important position.

During the latest
decades, many important results about the mild solutions to \eqref{1.1} have been established; see for example, Cannone \cite{Canj, Canb}, Germain-Pavlovic-
Staffilani \cite{Ger}, Giga-Miyakawa \cite{Gig}, Kato \cite{Kat}, Kato-Ponce \cite{KatP},
Koch-Tataru \cite{Koc}, Taylor \cite{Tay}, Wu \cite{Wu1, Wu2, Wu3, Wu4}.
See also the book \cite{Lem}.
Extending Koch-Tataru's $BMO^{-1}(\mathcal{R}^{n})$ space in \cite{Koc}, Xiao \cite{Xia1, Xia2} introduced the Q-spaces $Q_{0<\alpha<1}^{-1}(\mathcal{R}^{n})$ to investigate the global existence and uniqueness of the classical
Navier-Stokes system. Further, applying the wavelets in \cite{Mey}, we have extended these results to many function spaces: Trieble-Lizorkin spaces,
Besov Morrey spaces and Trieble-Lizorkin Morrey spaces (see \cite{LiX, LiY1, LiY2, Lin, YanL, YanY} ).

Recently, there were also many peoples considered solution in phase spaces. Denote
\begin{equation}\label{fb}
\tilde{B}(\hat{u}, \hat{v})(t, \xi)=\displaystyle\int_{0}^{t}e^{-(t-s)|\xi|^{2 }}\sum_{l}\xi_{l}\hat{u}_{l}*\hat{v}ds
+\xi\displaystyle\int_{0}^{t}e^{-(t-s)|\xi|^{2 }}\sum_{l, l'}\frac{\xi_{l}\xi_{l'}}{|\xi|^{2}}\hat{u}_{l}*\hat{v}_{l'}ds.
\end{equation}
By taking Fourier transform, the above iterative process (\ref{1.4}) is equivalent to the following
\begin{align}\label{2.6}
\hat{u}^{\tau+1}(t, \xi)=e^{-t|\xi|^{2 }}\hat{u}_{0}(\xi)+i\tilde{B}(\hat{u}^{\tau}, \hat{u}^{\tau})(t, \xi), \forall \tau\geq 0.
\end{align}
In this paper, we consider initial value $u_{0}$ belongs to  phase space $(P^{\alpha}_{ p,q})^{3}$.
Without considering analyticity, Giga-Inui-Mahalov-Saal \cite{GIMS} and
Lei-Lin \cite{Lei} obtained a global existence result for small initial data in $P^{-1}_{1, 1}$.
Xiao-Chen-Fan-Zhou considered the general cases.
For uniform analytic solution, only two cases of Fourier-weighted Lebesgue spaces have been considered.
$P^{2}_{\infty, \infty}$ has been studied in Le Jan-Sznitman \cite{Lej} (see also \cite{BisS} and \cite{Lem}).
$P^{\frac{1}{2}}_{2,2}$ has been studied by Fujita-Kato \cite{Fuj} (see also \cite{Lem}).
Fourier-weighted Lebesgue spaces are special Fourier-Herz spaces $P^{\alpha}_{p,p}$.

Among all these spaces,
${\rm BMO}^{-1}$ and $FH^{-1}_{1,1}$ are the biggest critical initial spaces.
So one is interesting to the special structure of solutions.
For $u_0\in X_0^{3}$ or $\hat{u}_0(\xi)\in X_0^{3}$,
\begin{definition}
(i) The solution $u(t,x)$ is analytic means there exists $0<\gamma<1$ such that
\begin{equation}\label{ana}
\sup\limits_{t>0} \|e^{- t^{\gamma} (-\Delta)^{\gamma}} u(t,x)\|_{X_{0}^{3}}<\infty.
\end{equation}

(ii) The solution $u(t,x)$ is uniform analytic means there exists $0<\gamma<1$ such that
\begin{equation}\label{uni-ana1}
\|\sup\limits_{t>0} e^{- t^{\gamma} (-\Delta)^{\gamma}} u(t,x)\|_{X_0^{3}}<\infty,
\end{equation}
or
\begin{equation}\label{uni-ana2}
\|\sup\limits_{t>0} e^{- t^{\gamma} |\xi|^{2\gamma}} \hat{u}(t,\xi)\|_{(X_0)^{3}}<\infty.
\end{equation}
\end{definition}

The earliest study of space analyticity can be found in Masuda \cite{Mas}.
Foias-Teman \cite{Foi} has shown the space analyticity of solution for period function in Sobolev space. Germain-Pavlovi\'{c}-Staffilani \cite{Ger}
constructed the solutions with space analyticity for small initial data in $BMO^{-1}$ (see also \cite{Miu}).
Biswas \cite{Bis} consider Gevrey regularity.
Uniform analyticity (\ref{uni-ana1}) and (\ref{uni-ana2}) mean uniform norm for $t$, which is stronger than analyticity (\ref{ana}).
Analyticity has been considered by many people, but there exist only a few work which considered uniform analyticity.
See Le Jan-Sznitman \cite{Lej} and Lemarie \cite{Lem}. See also \cite{Ger},  \cite{Mas} and \cite{Miu}.
Uniform norm for $t$ means that the dilation does not bring influence to the solution.

For $u(t, x)$, denote its Fourier transform by $\hat{u}(t, \xi)$, its uniform quantity for $t$ denote by $U(\xi)=\sup_{t>0}|\hat{u}(t, \xi)|$ and whose uniform exponential decay quantity denote by
$U_{e}(\xi)=\sup_{t>0}e^{t^{\frac{1}{2}}|\xi|}|\hat{u}(t, \xi)|$.
Herz spaces have been studied heavily in harmonic analysis. See \cite{Lu-Yang}.
We introduce the definition of uniform analyticity in Herz spaces.
\begin{definition}\label{def1.2}
$\hat{u}(t, \xi)\in (S^{\alpha}_{p,q})^3 $, if $U_{e}(\xi)\in H^{\alpha}_{p,q}$.
\end{definition}


In this section, we consider uniform analytic solution in Fourier-Herz spaces.
Our main skills are to prove that the existence of uniform analytic solution is equivalent to the boundedness of convolution inequality on Herz spaces.
\begin{theorem}\label{th3.1}
Given $p>\frac{3}{2}, 1<q<\infty$ and $\alpha=2-\frac{3}{p}$. There exists constant $C$ such that for
$u_0\in P^{\alpha}_{p,q}$ and $\|\hat{u}_{0}\|_{(H^{\alpha}_{p,q})^{3}}\leq C$, there exists a unique solution $u(t,x)$ such that
\begin{align} \label{3.1}
\hat{u}(t, \xi)\in (S^{\alpha}_{p,q})^{3}.
\end{align}
\end{theorem}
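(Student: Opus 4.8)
The plan is to solve the Fourier-side fixed-point equation \eqref{2.6} by a contraction argument in the Banach space
\[
E=\Big\{\hat u(t,\xi): \|\hat u\|_{E}:=\|U_e\|_{(H^\alpha_{p,q})^3}<\infty\Big\},
\qquad U_e(\xi)=\sup_{t>0}e^{t^{1/2}|\xi|}|\hat u(t,\xi)|,
\]
which is exactly $(S^\alpha_{p,q})^3$ with its natural norm. Two ingredients are needed: a linear bound for the free evolution $e^{-t|\xi|^2}\hat u_0(\xi)$ and a bilinear bound for $\tilde B$. Together with the abstract fixed-point lemma for quadratic equations $u=u^0+i\tilde B(u,u)$ they yield a unique small solution as soon as $\|\hat u_0\|_{(H^\alpha_{p,q})^3}\le C$ is small. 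For the linear term I would first record the elementary identity $\sup_{t>0}e^{t^{1/2}|\xi|-t|\xi|^2}=e^{1/4}$ (maximize the exponent in $t$ at $t=1/(4|\xi|^2)$). Hence $\sup_{t>0}e^{t^{1/2}|\xi|}e^{-t|\xi|^2}|\hat u_0(\xi)|=e^{1/4}|\hat u_0(\xi)|$, so the free evolution lies in $E$ with $\|u^0\|_E\le e^{1/4}\|\hat u_0\|_{(H^\alpha_{p,q})^3}$. This is the one place where the exponent $t^{1/2}$ in the weight is forced: it is the unique power making the Gaussian decay dominate the exponential growth uniformly in $\xi$.

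The heart of the matter is the bilinear estimate $\|i\tilde B(\hat u,\hat v)\|_E\le C_0\|\hat u\|_E\|\hat v\|_E$, which I expect to be the main obstacle. I would split it in two. First, writing $|\hat u_l(s,\eta)|\le e^{-s^{1/2}|\eta|}U_e^{(u)}(\eta)$ (with $U_e^{(u)},U_e^{(v)}$ the uniform quantities of $\hat u,\hat v$) and similarly for $\hat v$, and using $|\eta|+|\xi-\eta|\ge|\xi|$ to extract a factor $e^{-s^{1/2}|\xi|}$ from the convolution, the derivative structure of \eqref{fb} produces one power of $|\xi|$ and yields the pointwise bound
\[
|\tilde B(\hat u,\hat v)(t,\xi)|\le C\,|\xi|\,\big(U_e^{(u)}*U_e^{(v)}\big)(\xi)\int_0^t e^{-(t-s)|\xi|^2}e^{-s^{1/2}|\xi|}\,ds.
\]
Second, I would prove the scalar kernel estimate
\[
\sup_{t>0}\;|\xi|\,e^{t^{1/2}|\xi|}\int_0^t e^{-(t-s)|\xi|^2}e^{-s^{1/2}|\xi|}\,ds\;\le\;\frac{C}{|\xi|},
\]
which, after the scaling substitution $t=\sigma|\xi|^{-2}$, $s=\sigma'|\xi|^{-2}$, reduces to the $\xi$-independent statement $\sup_{\sigma}e^{\sigma^{1/2}}\int_0^\sigma e^{-(\sigma-\sigma')-\sigma'^{1/2}}\,d\sigma'<\infty$. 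Multiplying the pointwise bound by $e^{t^{1/2}|\xi|}$ and taking the supremum then leaves exactly
\[
\big\|\,i\tilde B(\hat u,\hat v)\,\big\|_E\le C\,\Big\|\,|\xi|^{-1}\big(U_e^{(u)}*U_e^{(v)}\big)\Big\|_{H^\alpha_{p,q}}.
\]

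It remains to control the right-hand side by $\|U_e^{(u)}\|_{H^\alpha_{p,q}}\|U_e^{(v)}\|_{H^\alpha_{p,q}}$, and this is precisely the convolution inequality on Herz spaces established in Section~4. A scaling check confirms that the weight $|\xi|^{-1}$ together with $\alpha=2-\tfrac3p$ makes this inequality scale invariant, which is the same criticality that makes $P^\alpha_{p,q}$ a critical initial-data space; the hypothesis $p>\tfrac32$ enters here as the positivity $\alpha=2-\tfrac3p>0$ that the Herz convolution bound requires. Granting it, I obtain $\|i\tilde B(\hat u,\hat v)\|_E\le C_0\|\hat u\|_E\|\hat v\|_E$, and combining this with the linear bound and the standard fixed-point lemma (convergence of the Picard iterates \eqref{2.6} and uniqueness in a small ball once, say, $4C_0C\le1$) produces the unique uniform-analytic solution with $\hat u(t,\xi)\in(S^\alpha_{p,q})^3$. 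The genuinely nontrivial work is thus isolated into the kernel estimate above and the Herz-space convolution inequality of Section~4; everything else is the routine Kato--Fujita iteration scheme transported to the weighted setting.
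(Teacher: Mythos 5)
Your proposal is correct and follows essentially the same route as the paper: a semigroup bound $\sup_{t>0}e^{t^{1/2}|\xi|-t|\xi|^2}<\infty$ for the linear term, a pointwise reduction of the bilinear term to $C|\xi|^{-1}\big(U_e*V_e\big)(\xi)$ via the triangle inequality $|\xi-\eta|+|\eta|\ge|\xi|$ and an elementary kernel estimate (your rescaled $\sup_{\sigma}e^{\sigma^{1/2}}\int_0^{\sigma}e^{-(\sigma-\sigma')-\sigma'^{1/2}}\,d\sigma'<\infty$ is exactly the paper's bound $I\le 2$ in \eqref{2.13} followed by $\int_0^t e^{-\frac12(t-s)|\xi|^2}ds\le 2|\xi|^{-2}$), and then the Herz-space convolution inequality \eqref{convolution} plus Picard iteration. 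The only difference is organizational: you separate the kernel estimate from the exponential-weight extraction, while the paper combines them into the single claim \eqref{2.13}.
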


Uniform analyticity  in phase space will be considered in this paper by
the estimate of convolution inequality.

\section{From well-posedness to convolution inequality}\label{sec-2}
\setcounter{equation}{0}

\subsection{Bilinear operator $\tilde{\tilde{B}}$ in phase space}
Denote
\begin{align}\label{2.1}
w(t, \xi)\equiv \tilde{\tilde{B}}(u, v)(t, \xi)=|\xi|\displaystyle\int_{0}^{t}e^{-(t-s)|\xi|^{2}}(|u(s, \cdot)|*|v(s, \cdot)|)(\xi)ds
\end{align}



\begin{theorem} \label{th:1}
Given $1\leq p,q\leq \infty$ and $\alpha\in \mathbb{R}$.
\begin{align}\label{2.81}
\tilde{\tilde{B}}\; \text{is bounded from} \;(S^{\alpha}_{p,q})^{3}\times (S^{\alpha}_{p,q})^{3}\;\text{to}\; (S^{\alpha}_{p,q})^{3}
\end{align}
implies
\begin{align}\label{2.8}
\tilde{B}\; \text{is bounded from} \;(S^{\alpha}_{p,q})^{3}\times (S^{\alpha}_{p,q})^{3}\;\text{to}\; (S^{\alpha}_{p,q})^{3}.
\end{align}
\end{theorem}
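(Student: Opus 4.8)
The plan is to exhibit $\tilde{\tilde{B}}$ as a pointwise scalar majorant of $\tilde{B}$ and then transfer the boundedness through the monotonicity of the Fourier--Herz norm. First I would record the elementary frequency estimates that neutralize the two multiplier factors in \eqref{fb}: for every $\xi$ one has $|\xi_l|\le|\xi|$ and $\frac{|\xi_l\xi_{l'}|}{|\xi|^2}\le 1$, while each component satisfies $|\hat{u}_l|\le|\hat{u}|$ and $|\hat{v}_{l'}|\le|\hat{v}|$, where $|\hat{u}|$ denotes the Euclidean length of the vector $\hat u$. Since the convolution of nonnegative functions is monotone in each argument, this yields $|\hat{u}_l(s,\cdot)*\hat{v}_{l'}(s,\cdot)|\le(|\hat{u}(s,\cdot)|*|\hat{v}(s,\cdot)|)$ pointwise.

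Next I would estimate each of the three components of $\tilde{B}(\hat{u},\hat{v})$ directly from \eqref{fb}. Pulling the absolute value inside the time integral and the sums, the single sum over $l$ contributes at most $3|\xi|\int_0^t e^{-(t-s)|\xi|^2}(|\hat{u}|*|\hat{v}|)\wrt s$ and the double sum over $l,l'$ at most $9|\xi|\int_0^t e^{-(t-s)|\xi|^2}(|\hat{u}|*|\hat{v}|)\wrt s$. Comparing with the definition \eqref{2.1}, this produces the key componentwise majorization $|\tilde{B}(\hat{u},\hat{v})(t,\xi)|\le 12\,\tilde{\tilde{B}}(\hat{u},\hat{v})(t,\xi)$ at every $(t,\xi)$, with a manifestly nonnegative right-hand side.

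The final step propagates this inequality through the $S^\alpha_{p,q}$ norm of Definition~\ref{def1.2}. Because the bound holds at each fixed $(t,\xi)$ and the weight $e^{t^{\frac12}|\xi|}$ is positive, it survives multiplication by the weight and the supremum over $t$, so the uniform quantities obey $U_{e}^{\tilde{B}}(\xi)\le 12\,U_{e}^{\tilde{\tilde{B}}}(\xi)$; here I use that $\sup_{t>0}$ of a product with a common positive factor respects the pointwise order. Since the Herz norm of Definition~\ref{def1.1} is assembled from $L^p$ integrals over dyadic annuli with nonnegative weights $2^{j\alpha}$, it is monotone under pointwise domination of nonnegative functions, whence $\|\tilde{B}(\hat{u},\hat{v})\|_{(S^\alpha_{p,q})^3}\le C\,\|\tilde{\tilde{B}}(\hat{u},\hat{v})\|_{(S^\alpha_{p,q})^3}$. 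Invoking the hypothesis \eqref{2.81} bounds the right-hand side by $C'\|\hat{u}\|_{(S^\alpha_{p,q})^3}\|\hat{v}\|_{(S^\alpha_{p,q})^3}$, which is precisely \eqref{2.8}.

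I do not expect a genuinely hard step here: the entire content is the reduction from the oscillatory Riesz-type kernel of $\tilde{B}$ to the positive scalar operator $\tilde{\tilde{B}}$, and the only points requiring care are that every object in the majorization chain---the convolutions, the heat factor $e^{-(t-s)|\xi|^2}$, the weight $e^{t^{\frac12}|\xi|}$, and the annular weights $2^{j\alpha}$---is nonnegative, so that the bound is preserved successively by the time integral, by $\sup_{t>0}$, and by the Herz norm. The substantive analytic work, namely the verification of the hypothesis \eqref{2.81} itself, is the convolution inequality on Herz spaces deferred to the later sections.
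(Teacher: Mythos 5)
Your proposal is correct and takes essentially the same route as the paper: the paper's (very terse) proof likewise splits $\tilde{B}$ into the component bilinear pieces $\tilde{B}_{l,l'}$ and $\tilde{B}_{k,l,l'}$ and reduces their boundedness to that of the positive majorant $\tilde{\tilde{B}}$ via $|\xi_l|\le|\xi|$ and $|\xi_k\xi_l\xi_{l'}|/|\xi|^2\le|\xi|$. You simply make explicit the pointwise domination, the passage through $\sup_{t>0}e^{t^{1/2}|\xi|}(\cdot)$, and the monotonicity of the Herz norm, all of which the paper leaves implicit.
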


\begin{proof}
For $k, l, l'\in \{1, 2, 3\}$, denote
$$
\tilde{B}_{l, l'}(\hat{u}_{l}, \hat{v}_{l'})(t, \xi)=\displaystyle\int_{0}^{t}e^{-(t-s)|\xi|^{2 }}\sum_{l}\xi_{l}\hat{u}_{l}*\hat{v}_{l'}ds
$$
and
$$
\tilde{B}_{k, l, l'}(\hat{u}_{l}, \hat{v}_{l'})(t, \xi)
=\displaystyle\int_{0}^{t}e^{-(t-s)|\xi|^{2 }}\sum_{l, l'}\frac{\xi_{k}\xi_{l}\xi_{l'}}{|\xi|^{2}}\hat{u}_{l}*\hat{v}_{l'}ds
$$

We note that, if $\tilde{\tilde{B}}$ satisfies (\ref{2.81}),
then $\tilde{B}_{l, l'}, \tilde{B}_{k, l, l'}$ satisfied
\begin{align}\label{2.10}
\tilde{B}_{l, l'}, \tilde{B}_{k, l, l'}\; \text{is bounded operator from} \;S^{\alpha}_{p,q}\times S^{\alpha}_{p,q}\;\text{to}\; S^{\alpha}_{p,q}.
\end{align}
Further, if $\forall k, l, l'\in \{1, 2, 3\}$, $\tilde{B}_{l, l'}, \tilde{B}_{k, l, l'}$ satisfied the condition \eqref{2.10},
then $\tilde{B}$ satisfied the condition \eqref{2.8}.

\end{proof}


\subsection{From boundedness of $\tilde{\tilde{B}}$ to the convolution inequality}

In this subsection, we transfer the boundedness of bilinear operator $\tilde{\tilde{B}}$ to the estimation of the following convolution inequality on Herz spaces $H^{\alpha}_{p,q}$.

\begin{theorem} \label{th:3.2}

 If $p>\frac{3}{2}, \alpha=2-\frac{2}{p}$ and $1<q<\infty$, then
\begin{align}\label{convolution}
\||\xi|^{-1}(U*V)(\xi)\|_{H^{\alpha}_{p,q}}\leq C\|U\|_{H^{\alpha}_{p,q}}\|V\|_{H^{\alpha}_{p,q}}.
\end{align}
\end{theorem}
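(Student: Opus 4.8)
The plan is to run a Littlewood--Paley / paraproduct decomposition directly on the Herz norm and reduce the continuous convolution to a one-dimensional convolution of sequences against an $\ell^1$ kernel. First I would set $A_j=\{\xi\in\mathbb{R}^3:2^j\le|\xi|<2^{j+1}\}$, write $U=\sum_{j\in\mathbb{Z}}U_j$ and $V=\sum_{k\in\mathbb{Z}}V_k$ with $U_j=U\mathbf{1}_{A_j}$, $V_k=V\mathbf{1}_{A_k}$, and record the abbreviations $a_j=2^{j\alpha}\|U_j\|_{L^p}$, $b_k=2^{k\alpha}\|V_k\|_{L^p}$, so that $\|U\|_{H^\alpha_{p,q}}=\|(a_j)\|_{\ell^q}$ and $\|V\|_{H^\alpha_{p,q}}=\|(b_k)\|_{\ell^q}$. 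Writing $c_m=2^{m\alpha}\big\||\xi|^{-1}(U*V)\mathbf{1}_{A_m}\big\|_{L^p}$, the target becomes $\|(c_m)\|_{\ell^q}\le C\|(a_j)\|_{\ell^q}\|(b_k)\|_{\ell^q}$. The geometric input is that $U_j*V_k$ is supported in $\{|\xi|\le 2^{j+1}+2^{k+1}\}$, so only the pairs with $\max(j,k)\ge m-C$ reach the annulus $A_m$, and on $A_m$ the multiplier obeys $|\xi|^{-1}\approx 2^{-m}$.

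Second, on each pair I would combine Young's convolution inequality on $\mathbb{R}^3$ with H\"older's inequality on the annuli (using $|A_j|\approx 2^{3j}$). Splitting into the three paraproduct regimes --- high--low ($k\le j\approx m$), low--high ($j\le k\approx m$), and high--high ($j\approx k\ge m$) --- I would use $\|U_j*V_k\|_{L^p}\le\|U_j\|_{L^p}\|V_k\|_{L^1}$ with $\|V_k\|_{L^1}\lesssim 2^{3k(1-1/p)}\|V_k\|_{L^p}$ in the off-diagonal regimes, and in the high--high regime exploit the localization of the output to $A_m$, bounding $\|(U_j*V_j)\mathbf{1}_{A_m}\|_{L^p}$ by $|A_m|^{1/p-1/r}\|U_j*V_j\|_{L^r}$ (via $L^p*L^p\to L^r$ when $p<2$ and the $L^\infty$ endpoint when $p\ge2$). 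The point of these bookkeeping choices is that, precisely at the critical relation $\alpha=2-3/p$ (with $n=3$), every ``diagonal'' power of $2$ cancels and a pure off-diagonal factor survives: $c_m\lesssim a_m\sum_{k\le m}2^{-(m-k)}b_k$ in the high--low regime (and symmetrically in low--high), while $c_m\lesssim\sum_{j\ge m}2^{-(4-6/p)(j-m)}a_jb_j$ in the high--high regime.

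Third, each of these is a discrete convolution of $(a_\bullet b_\bullet)$, or of $(b_\bullet)$, against a one-sided geometric kernel, which lies in $\ell^1$; hence Young's inequality for sequences (equivalently a Schur test) yields the $\ell^q\to\ell^q$ bound, and the product of sequences is controlled by $\|(a_jb_j)\|_{\ell^q}\le\|(a_j)\|_{\ell^\infty}\|(b_j)\|_{\ell^q}\le\|(a_j)\|_{\ell^q}\|(b_j)\|_{\ell^q}$ through the embedding $\ell^q\hookrightarrow\ell^\infty$. I expect the high--high interaction to be the main obstacle: there the output frequency $m$ can be far below the common input frequency $j$, so the naive Young bound carries no decay, and one must spend the measure factor $|A_m|^{1/p-1/r}$ to manufacture the geometric gain $2^{-(4-6/p)(j-m)}$. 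This gain is summable exactly when $4-6/p>0$, i.e. $p>\tfrac32$, which is why the hypothesis $p>\tfrac32$ is sharp for this step; the restriction $1<q<\infty$ then plays no role here beyond $\ell^q$ being a normed space and is needed only for the surrounding fixed-point argument.
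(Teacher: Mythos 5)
Your proposal is correct, and its skeleton coincides with the paper's: the paper's three terms $I_1$, $I_2$, $I_3$ (integration over $|\eta|\le 2^{j-1}$, $|\eta|\ge 2^{j+2}$, $2^{j-1}\le|\eta|\le 2^{j+2}$) are exactly your high--low, high--high and low--high regimes, and in both arguments the high--low piece reduces to the kernel $2^{-(m-k)}$ and the high--high piece to a geometric gain summable precisely when $p>\tfrac32$ (you both read $\alpha=2-\tfrac3p$; the exponent $2-\tfrac2p$ in the theorem statement is a typo). The differences are in execution and are mildly in your favour. The paper keeps the inner variable continuous in $I_3$ and inserts an auxiliary weight $|\xi-\eta|^{\lambda}$ with $\alpha<\lambda<3/p'$, then splits into $q>p$ and $q\le p$ via H\"older in $j'$; your full double dyadic decomposition replaces all of this with Young's inequality for sequences, which also makes transparent your (correct) remark that $q$ plays no role beyond $\ell^q$ being a normed space. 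More substantively, in the high--high regime the paper interpolates with a parameter $\rho$ constrained by $\tfrac p3<\rho\le\min(1,p-1)$, which forces $p<3$ and obliges the author to defer $p\ge3$ to a duality argument with $(H^{\alpha}_{p,q})'=H^{-\alpha}_{p',q'}$ that is only sketched; your choice of Young exponents ($L^p*L^p\to L^r$ for $p<2$, the $L^\infty$ endpoint for $p\ge2$) covers all $p>\tfrac32$ uniformly. One cosmetic imprecision: the rate $2^{-(4-6/p)(j-m)}$ you quote is the $p<2$ rate; the $L^\infty$-endpoint computation for $p\ge2$ yields $2^{-(j-m)}$, which is still summable, so nothing breaks.
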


\begin{remark} \label{re:3.3}
The weighted Lebesgue cases have been considered in Kerman \cite{Ker}.
\eqref{convolution} extend such results to Herz spaces.
\end{remark}

In the rest of this subsection, we show how the above convolution inequality implies the boundedness of $\tilde{\tilde{B}}$.
\begin{theorem}\label{th:2}
Given $p>\frac{3}{2}, \alpha=2-\frac{2}{p}$ and $1<q<\infty$.
If the inequality \eqref{convolution} is true, then
$\tilde{\tilde{B}}$ is bounded form $S^{\alpha}_{p,q}\times S^{\alpha}_{p,q}$ to $S^{\alpha}_{p,q}$.
\end{theorem}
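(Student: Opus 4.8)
The plan is to dominate the bilinear quantity $\tilde{\tilde B}(u,v)$ pointwise in frequency by a constant multiple of $|\xi|^{-1}(U_e*V_e)(\xi)$ and then invoke the assumed convolution inequality \eqref{convolution}. Writing $w=\tilde{\tilde B}(u,v)$ as in \eqref{2.1} and $W_e(\xi)=\sup_{t>0}e^{t^{1/2}|\xi|}|w(t,\xi)|$, membership $w\in S^{\alpha}_{p,q}$ means exactly $W_e\in H^{\alpha}_{p,q}$ by Definition \ref{def1.2}; so the whole theorem reduces to bounding $\|W_e\|_{H^{\alpha}_{p,q}}$ by $\|U_e\|_{H^{\alpha}_{p,q}}\|V_e\|_{H^{\alpha}_{p,q}}$. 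Note that $w\geq 0$, so $|w|=w$ throughout.

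First I would strip the time variable out of the convolution. From the definition of $U_e,V_e$ one has the pointwise bounds $|u(s,\eta)|\le e^{-s^{1/2}|\eta|}U_e(\eta)$ and $|v(s,\eta)|\le e^{-s^{1/2}|\eta|}V_e(\eta)$. Inserting these into $(|u(s,\cdot)|*|v(s,\cdot)|)(\xi)=\int|u(s,\xi-\zeta)|\,|v(s,\zeta)|\,d\zeta$ and using the triangle inequality $|\xi-\zeta|+|\zeta|\ge|\xi|$ in the exponent lets me factor out $e^{-s^{1/2}|\xi|}$, giving $(|u(s,\cdot)|*|v(s,\cdot)|)(\xi)\le e^{-s^{1/2}|\xi|}(U_e*V_e)(\xi)$. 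Since $U_e*V_e$ no longer depends on $s$, it leaves the time integral and I am reduced to a scalar kernel estimate,
\[
W_e(\xi)\le (U_e*V_e)(\xi)\cdot\sup_{t>0}\Big\{e^{t^{1/2}|\xi|}|\xi|\int_0^t e^{-(t-s)|\xi|^2}e^{-s^{1/2}|\xi|}\,ds\Big\}.
\]

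The crux is then to show this bracketed supremum is at most $C|\xi|^{-1}$ uniformly in $\xi$. I would make this transparent through the parabolic scaling $\sigma=s|\xi|^2$, $\tau=t|\xi|^2$, under which $s^{1/2}|\xi|=\sigma^{1/2}$, $t^{1/2}|\xi|=\tau^{1/2}$, $(t-s)|\xi|^2=\tau-\sigma$ and $ds=|\xi|^{-2}d\sigma$; the $|\xi|$ prefactor together with the Jacobian produces exactly the wanted $|\xi|^{-1}$, and the whole estimate collapses to the purely numerical claim
\[
J(\tau):=e^{\tau^{1/2}-\tau}\int_0^\tau e^{\sigma-\sigma^{1/2}}\,d\sigma\le C\qquad(\tau>0).
\]
This numerical bound is where the real balancing happens, and I regard it as the main obstacle. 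For $0<\tau\le 1$ it is immediate, since the integrand is $\le 1$ and the prefactor is $\le e^{1/4}$. For $\tau>1$ the mechanism is the elementary inequality $\frac{d}{d\sigma}e^{\sigma-\sigma^{1/2}}=(1-\tfrac12\sigma^{-1/2})e^{\sigma-\sigma^{1/2}}\ge\tfrac12 e^{\sigma-\sigma^{1/2}}$, which on integration yields $\int_1^\tau e^{\sigma-\sigma^{1/2}}d\sigma\le 2e^{\tau-\tau^{1/2}}$; multiplying by $e^{\tau^{1/2}-\tau}$ shows $J(\tau)$ stays bounded. Conceptually this is precisely the statement that the exponential weight $e^{t^{1/2}|\xi|}$ is exactly balanced against the heat factor $e^{-(t-s)|\xi|^2}$, which is why the specific weight in Definition \ref{def1.2} is the correct one.

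Once the pointwise kernel bound $W_e(\xi)\le C|\xi|^{-1}(U_e*V_e)(\xi)$ is in hand, the conclusion is immediate: applying the convolution inequality \eqref{convolution} gives
\[
\|W_e\|_{H^{\alpha}_{p,q}}\le C\big\||\xi|^{-1}(U_e*V_e)\big\|_{H^{\alpha}_{p,q}}\le C\|U_e\|_{H^{\alpha}_{p,q}}\|V_e\|_{H^{\alpha}_{p,q}},
\]
which is exactly the asserted boundedness of $\tilde{\tilde B}$ from $S^{\alpha}_{p,q}\times S^{\alpha}_{p,q}$ to $S^{\alpha}_{p,q}$. The hypotheses $p>\tfrac32$, $\alpha=2-\tfrac2p$, $1<q<\infty$ enter only through the validity of \eqref{convolution}; the reduction carried out above is insensitive to them.
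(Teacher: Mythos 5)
Your proposal is correct and follows essentially the same route as the paper: both arguments use the triangle inequality $|\xi-\eta|+|\eta|\ge|\xi|$ to pull the weight $e^{-s^{1/2}|\xi|}$ out of the convolution, reduce everything to the pointwise domination $W_e(\xi)\le C|\xi|^{-1}(U_e*V_e)(\xi)$, and then invoke \eqref{convolution}. The only difference is cosmetic: the paper verifies the kernel bound via the pointwise exponent estimate \eqref{2.13} (showing $-\tfrac12(t-s)|\xi|^2+(t^{1/2}-s^{1/2})|\xi|\le 2$ and keeping half the heat decay for the $s$-integral), whereas you rescale parabolically and bound the one-variable function $J(\tau)$ by a derivative comparison — both are valid elementary proofs of the same estimate.
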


\begin{proof}
We claim the following inequality
\begin{align}\label{2.13}
e^{-(t-s)|\xi|^{2 }}e^{-s^{\frac{1}{2 }}|\xi-\eta|}e^{-s^{\frac{1}{2 }}|\eta|}\leq
e^{2}e^{-t^{\frac{1}{2 }}|\xi|}e^{-\frac{1}{2}(t-s)|\xi|^{2 }}.
\end{align}
In fact, using the triangle inequality gives
\begin{align*}
e^{-(t-s)|\xi|^{2 }}e^{-s^{\frac{1}{2 }}|\xi-\eta|}e^{-s^{\frac{1}{2 }}|\eta|}\leq
e^{-\frac{1}{2}(t-s)|\xi|^{2 }+(t^{\frac{1}{2 }}-s^{\frac{1}{2 }})|\xi|}e^{-t^{\frac{1}{2 }}|\xi|}e^{-\frac{1}{2}(t-s)|\xi|^{2 }}.
\end{align*}
Let $I=-\frac{1}{2}(t-s)|\xi|^{2 }+(t^{\frac{1}{2 }}-s^{\frac{1}{2 }})|\xi|$, it suffices to prove
$$
I\leq 2 .
$$
Note that
\begin{align*}
x_{2}^{2 }-x_{1}^{2 }=(x_{2}-x_{1})(x_{2}+ x_{1}).
\end{align*}
Hence, we have
\begin{align*}
I=(t^{\frac{1}{2 }}-s^{\frac{1}{2 }})|\xi|\Big(1-\frac{1}{2} (t^{\frac{1}{2 }} + s^{\frac{1}{2 }})|\xi|\Big).
\end{align*}
On one hand, if $(t^{\frac{1}{2 }}+ s^{\frac{1}{2 }})|\xi|\geq 2 $, then we have $I\leq 0<2 $. On the other hand,
if $(t^{\frac{1}{2 }}+ s^{\frac{1}{2 }})|\xi|\leq 2  $, then we have $I\leq t^{\frac{1}{2 }}|\xi|\leq 2$.

Denote $w_{e}(t, \xi)=e^{t^{\frac{1}{2 }}|\xi|}w(t, \xi)$, $u_{e}(t, \xi)=e^{t^{\frac{1}{2 }}|\xi|}u(t, \xi)$ and
$v_{e}(t, \xi)=e^{t^{\frac{1}{2 }}|\xi|}v(t, \xi)$.
Hence \eqref{2.1} and \eqref{2.13} imply the following inequality:
\begin{align}\label{2.14}
w_{e}(t, \xi)\leq e^{2  }|\xi|\displaystyle\int_{0}^{t}e^{-\frac{1}{2}(t-s)|\xi|^{2 }}ds(|u_{e}(t, \cdot)|*|v_{e}(t, \cdot)|)(\xi)ds.
\end{align}
Let $W_{e}(\xi)=\sup_{t>0}|w_{e}(t, \xi)|$, $U_{e}(\xi)=\sup_{t>0}|u_{e}(t, \xi)|$ and $V_{e}(\xi)=\sup_{t>0}|v_{e}(t, \xi)|$. Taking supremum for $s$ and $t$ in the
equation \eqref{2.1}, we get
\begin{align*}
W_{e}(\xi)\leq C|\xi|\sup_{t>0}\displaystyle\int_{0}^{t}e^{-\frac{1}{2}(t-s)|\xi|^{2 }}ds(U_{e}*V_{e})(\xi)\leq C|\xi|^{-1 }(U_{e}*V_{e})(\xi).
\end{align*}
Hence the convolution inequality \eqref{convolution} implies the boundedness of $\tilde{\tilde{B}}$ form $S^{\alpha}_{p,q}\times S^{\alpha}_{p,q}$ to $S^{\alpha}_{p,q}$.
\end{proof}

\subsection{Proof of theorem \ref{th3.1}}
In this subsection, we prove theorem \ref{th3.1}.
For a distribution $u_{0}$, Picard's iterative process is to find out a mild solution near some function $u^{0}$.
See Cannone \cite{Canb} and Koch-Tataru \cite{Koc};
see also Li-Yang \cite{LiY1, LiY2} and Lin-Yang \cite{Lin}.

\begin{proof}
For small initial value $u_{0}$ in some space $X_{0}^{3}$, if there exists certain space $X^{3}$ such that
\begin{align}\label{2.4}
\text{the operator}\; e^{t \Delta  }\; \text{is continuous from} \;X_{0}^{3}\;\text{to}\; X^{3}
\end{align}
and
\begin{align}\label{2.5}
B\; \text{is bounded operator from} \;X^{3}\times X^{3}\;\text{to}\; X^{3},
\end{align}
then $B(u^{\tau}, u^{\tau})(t, x)$ defined in the equation (\ref{1.4}) is an error term. Picard's contraction principle tells that $u^{\tau}$ converges to
the unique solution of the above Navier-Stokes equation \eqref{1.1}.

Take $X_{0}^{3}=(P^{\alpha}_{p,q})^{3}$ and $X^{3}=(S^{\alpha}_{p,q})^{3}$.
It is easy to see
\begin{theorem} \label{th:3}
Given $1\leq p, q\leq \infty, \alpha\in \mathbb{R}$.
$e^{t \Delta  }$ is continuous from $(P^{\alpha}_{p,q})^{3}$ to $(S^{\alpha}_{p,q})^{3}$. 
\end{theorem}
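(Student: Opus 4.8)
The plan is to move everything to the Fourier side, where $e^{t\Delta}$ acts as the multiplier $e^{-t|\xi|^2}$, and then to observe that the supremum defining membership in $(S^\alpha_{p,q})^3$ can be computed pointwise in $\xi$ by an elementary one-variable optimization. Since the norms of both $P^\alpha_{p,q}$ and $S^\alpha_{p,q}$ are built from the same Herz norm $H^\alpha_{p,q}$ of a single function of the variable $\xi$, continuity will follow from a pointwise comparison of those two functions of $\xi$, with a constant independent of $p,q,\alpha$.

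Concretely, for $u_0$ with $\hat u_0\in(H^\alpha_{p,q})^3$ I would set $\hat u^0(t,\xi)=e^{-t|\xi|^2}\hat u_0(\xi)$, which is exactly the Fourier transform of $e^{t\Delta}u_0$. The quantity controlling membership in $S^\alpha_{p,q}$ is
\[
U_e(\xi)=\sup_{t>0}e^{t^{\frac12}|\xi|}\,|\hat u^0(t,\xi)|=|\hat u_0(\xi)|\,\sup_{t>0}e^{t^{\frac12}|\xi|-t|\xi|^2}.
\]
The factor $|\hat u_0(\xi)|$ pulls out of the supremum because it does not depend on $t$, so the whole problem reduces to estimating the scalar quantity $\sup_{t>0}e^{t^{\frac12}|\xi|-t|\xi|^2}$.

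To evaluate this supremum I would substitute $r=t^{\frac12}|\xi|$, so that $t|\xi|^2=r^2$ and the exponent becomes $r-r^2=r(1-r)$, a function of the single variable $r\geq 0$ that is independent of $\xi$. Its maximum over $r>0$ is attained at $r=\tfrac12$ with value $\tfrac14$, hence $\sup_{t>0}e^{t^{\frac12}|\xi|-t|\xi|^2}=e^{\frac14}$ for every $\xi\neq 0$, and therefore $U_e(\xi)=e^{\frac14}|\hat u_0(\xi)|$ pointwise.

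Taking the Herz norm of both sides then gives $\|U_e\|_{H^\alpha_{p,q}}=e^{\frac14}\|\hat u_0\|_{H^\alpha_{p,q}}$, which, applied componentwise, is precisely the bound $\|\hat u^0\|_{(S^\alpha_{p,q})^3}=e^{\frac14}\|\hat u_0\|_{(P^\alpha_{p,q})^3}$ and establishes continuity with explicit operator norm $e^{\frac14}$ for all admissible $p,q,\alpha$. There is essentially no obstacle here, which matches the author's ``it is easy to see''; the only point requiring care is the pointwise factorization of the $t$-supremum and the one-variable maximization, after which the Herz-norm estimate is immediate and uniform in the parameters.
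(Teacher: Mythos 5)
Your proof is correct and is exactly the computation the paper leaves implicit behind ``it is easy to see'': on the Fourier side $e^{t\Delta}u_0$ has modulus $e^{-t|\xi|^2}|\hat u_0(\xi)|$, and the substitution $r=t^{1/2}|\xi|$ reduces the $t$-supremum to $\sup_{r>0}e^{r-r^2}=e^{1/4}$, giving $U_e(\xi)\le e^{1/4}|\hat u_0(\xi)|$ pointwise and hence the Herz-norm bound with operator norm $e^{1/4}$. The only cosmetic caveat is that at $\xi=0$ the supremum equals $1$ rather than $e^{1/4}$, so the clean statement is the inequality $U_e(\xi)\le e^{1/4}|\hat u_0(\xi)|$ rather than an equality, which is all that continuity requires.
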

By theorems \ref{th:1}, \ref{th:3.2}, \ref{th:2} and \ref{th:3}, we know Theorem \ref{th3.1} is true.
\end{proof}


\section{The proof of convolution inequality}\label{sec:main}
\setcounter{equation}{0}

To simplify the proof, we prove only the case $\frac{3}{2}<p<3$ in this section.
For $p\geq 3$, we have to use the duality of Herz spaces.
For $1<p,q<\infty, \alpha\in \mathbb{R}$, we have $(H^{\alpha}_{p,q})'= H^{-\alpha}_{p',q'}$.
If we use auxiliary function $h\in H^{-\alpha}_{p',q'}$ and the similar idea in this section, we can prove that
$\int |\xi|^{-1} |U(\xi)| \ast |V(\xi)| |h(\xi)||\xi|^{-\alpha} d\xi$ is bounded,
and the above result is true for all $p>\frac{3}{2}$.

Denote $W_{U,V}= \sum\limits_{j\in \mathbb{Z}} 2^{qj(1-\frac{3}{p})}\{\int_{2^{j}\leq |\xi|\leq 2^{j+1}} (|U(\xi)|\ast |V(\xi)|)^p d\xi\}^{\frac{q}{p}}.$ We decompose the integration of $\xi$ to the dyadic ring and get
$$W_{U,V}\leq C\sum\limits_{j\in \mathbb{Z}} 2^{qj(1-\frac{3}{p})} \{\int_{2^{j}\leq |\xi|\leq 2^{j+1}} (\int |U(\xi-\eta)| |V(\eta)|d\eta)^{p} d\xi\}^{\frac{q}{p}}.$$

We decompose then the integration of $\eta$ to three parts and get
$$\begin{array}{rl}
W_{U,V}\leq &C\sum\limits_{j\in \mathbb{Z}} 2^{qj(1-\frac{3}{p})} \{\int_{2^{j}\leq |\xi|\leq 2^{j+1}} (\int_{ |\eta|\leq 2^{j-1}} |U(\xi-\eta)| |V(\eta)|d\eta)^{p} d\xi \}^{\frac{q}{p}}\\
&+C\sum\limits_{j\in \mathbb{Z}} 2^{qj(1-\frac{3}{p})} \{\int_{2^{j}\leq |\xi|\leq 2^{j+1}} (\int_{ |\eta|\geq 2^{j+2}} |U(\xi-\eta)| |V(\eta)|d\eta)^{p} d\xi\}^{\frac{q}{p}}\\
&+C\sum\limits_{j\in \mathbb{Z}}  2^{qj(1-\frac{3}{p})}\{\int_{2^{j}\leq |\xi|\leq 2^{j+1}} (\int_{2^{j-1}\leq |\eta|\leq 2^{j+2}} |U(\xi-\eta)| |V(\eta)|d\eta)^{p} d\xi \}^{\frac{q}{p}}\\
\equiv & I_1+ I_2 +I_3.\end{array}$$
For $I_1, I_2$ and $I_3$, we apply different way to enlarge the convolution part and for $i=1,2,3$, we prove $I_i\leq C\|U\|_{H^{\alpha}_{p,q}}^q \|V\|_{H^{\alpha}_{p,q}}^q$. In this section, for $1<p\leq \infty$, denote $p'=\frac{p}{p-1}$.

For $I_1$, we have
$$\begin{array}{rl}
I_1\leq & C\sum\limits_{j\in \mathbb{Z}} 2^{qj(1-\frac{3}{p})} \{\int_{2^{j}\leq |\xi|\leq 2^{j+1}} \int_{ |\eta|\leq 2^{j-1}} |U(\xi-\eta)|^p |V(\eta)|d\eta  d\xi (\int_{ |\eta|\leq 2^{j-1}}  |V(\eta)|d\eta)^{p-1}
\}^{\frac{q}{p}}\\
\leq & C\sum\limits_{j\in \mathbb{Z}} 2^{qj(1-\frac{3}{p})} \{ \int_{2^{j-1}\leq |\xi|\leq 5\cdot 2^{j}}  |U(\xi)|^p  d\xi (\int_{ |\eta|\leq 2^{j-1}}  |V(\eta)|d\eta)^{p}\}^{\frac{q}{p}}\\
=& C\sum\limits_{j\in \mathbb{Z}} 2^{qj(1-\frac{3}{p})} \{ \int_{2^{j-1}\leq |\xi|\leq 5\cdot 2^{j}}  |U(\xi)|^p  d\xi \}^{\frac{q}{p}} (\int_{ |\eta|\leq 2^{j-1}}  |V(\eta)|d\eta)^{q}.
\end{array}$$
Further,
$$\begin{array}{rl}
\int_{ |\eta|\leq 2^{j-1}}  |V(\eta)|d\eta &= \sum\limits_{j'\leq j-2} \int_{ 2^{j'}\leq |\eta|\leq 2^{j'+1}}  |V(\eta)|d\eta \\
&\leq C \sum\limits_{j'\leq j-2}  2^{\frac{3j'}{p'}} \{\int_{ 2^{j'}\leq |\eta|\leq 2^{j'+1}}  |V(\eta)|^{p}d\eta\} ^{\frac{1}{p}} \\
&=  C \sum\limits_{j'\leq j-2} 2^{j'\alpha} \{\int_{ 2^{j'}\leq |\eta|\leq 2^{j'+1}}  |V(\eta)|^{p}d\eta\} ^{\frac{1}{p}}  2^{j'} \\
&\leq C (\sum\limits_{j'\leq j-2} 2^{qj'\alpha} \{\int_{ 2^{j'}\leq |\eta|\leq 2^{j'+1}}  |V(\eta)|^{p}d\eta\} ^{\frac{q}{p}})^{\frac{1}{q}} (\sum\limits_{j'\leq j-2} 2^{q'j'})^{\frac{1}{q'}} \\
&\leq 2^{j} (\sum\limits_{j'\leq j-2} 2^{qj'\alpha} \{\int_{ 2^{j'}\leq |\eta|\leq 2^{j'+1}}  |V(\eta)|^{p}d\eta\} ^{\frac{q}{p}})^{\frac{1}{q}}.
\end{array}$$
Hence
$$I_1\leq C\|U\|_{H^{\alpha}_{p,q}}^q \|V\|_{H^{\alpha}_{p,q}}^q.$$

Then we consider $I_3$.
Take $\alpha< \lambda<\frac{3}{p'}$.
$$\begin{array}{rl} &I_{U,V}=\int_{2^{j}\leq |\xi|\leq 2^{j+1}} (\int_{2^{j-1}\leq |\eta|\leq 2^{j+2}} |U(\xi-\eta)| |V(\eta)|d\eta)^{p} d\xi  \\
\leq &\int_{2^{j}\leq |\xi|\leq 2^{j+1}} \int_{2^{j-1}\leq |\eta|\leq 2^{j+2}} |\xi-\eta|^{\lambda p} |U(\xi-\eta)|^{p} |V(\eta)| ^{p} d\eta  (\int_{2^{j-1}\leq |\eta|\leq 2^{j+2}}|\xi-\eta|^{-\lambda p'} d\eta)^{p-1} d\xi  \\
\leq &\int_{2^{j}\leq |\xi|\leq 2^{j+1}} \int_{2^{j-1}\leq |\eta|\leq 2^{j+2}} |\xi-\eta|^{\lambda p} |U(\xi-\eta)|^{p} |V(\eta)| ^{p} d\eta  (\int_{ |\xi-\eta|\leq 3\cdot 2^{j+1}}|\xi-\eta|^{-\lambda p'} d\eta)^{p-1} d\xi  \\
\leq & 2^{ (3-\lambda p') (p-1) j} \int_{2^{j}\leq |\xi|\leq 2^{j+1}} \int_{2^{j-1}\leq |\eta|\leq 2^{j+2}} |\xi-\eta|^{\lambda p} |U(\xi-\eta)|^{p} |V(\eta)| ^{p} d\eta  d\xi  \\
\leq & 2^{ (3-\lambda p') (p-1) j}  \int_{ |\xi |\leq 2^{j+3}} |\xi|^{\lambda p} |U(\xi)|^{p}   d\xi  \int_{2^{j-1}\leq |\eta|\leq 2^{j+2}}   |V(\eta)| ^{p} d\eta.\end{array}$$

(i) For $q> p$,  we have
$$\begin{array}{rl}& \int_{ |\xi |\leq 2^{j+3}} |\xi|^{\lambda p} |U(\xi)|^{p}   d\xi= \sum\limits_{j'\leq j+2} \int_{ 2^{j'}\leq  |\xi |\leq 2^{j'+1}} |\xi|^{\lambda p} |U(\xi)|^{p}   d\xi   \\
\leq & C \sum\limits_{j'\leq j+2} 2^{p \lambda j'}\int_{ 2^{j'}\leq  |\xi |\leq 2^{j'+1}} |U(\xi)|^{p}   d\xi\\
=&C \sum\limits_{j'\leq j+2} 2^{p (\lambda-\alpha)j'} 2^{p \alpha j'}\int_{ 2^{j'}\leq  |\xi |\leq 2^{j'+1}} |U(\xi)|^{p}   d\xi\\
\leq & C \{ \sum\limits_{j'\leq j+2} 2^{q\alpha j'} (\int_{ 2^{j'}\leq  |\xi |\leq 2^{j'+1}} |U(\xi)|^{p}   d\xi)^{\frac{q}{p} } \} ^{\frac{p}{q}}
\{ \sum\limits_{j'\leq j+2} 2^{\frac{pq}{q-p}(\lambda-\alpha )j'} \} ^{\frac{q-p}{q}}\\
\leq & 2^{pj -(3-\lambda p') (p-1) j}  \{\sum\limits_{j'\leq j+2} 2^{qj'\alpha} (\displaystyle\int_{2^{j'}\leq |\xi|\leq 2^{j'+1}} |U(\xi)|^{p}d\xi)^{\frac{q}{p}}\}^{\frac{p}{q}}.\end{array}$$
Hence
$$\begin{array}{l} I_{U,V}\leq 2^{pj} \{\sum\limits_{j'\leq j+2} 2^{qj'\alpha} (\displaystyle\int_{2^{j'}\leq |\xi|\leq 2^{j'+1}} |U(\xi)|^{p}d\xi)^{\frac{q}{p}}\}^{\frac{p}{q}}
\int_{2^{j-1}\leq |\eta|\leq 2^{j+2}}  |V(\eta)|^{p} d\eta  .\end{array}$$

So we get
$$ I_3 \leq  C
\sum\limits_{j\in \mathbb{Z}} 2^{ q\alpha j}
\sum\limits_{j'\leq j+2} 2^{qj'\alpha} (\displaystyle\int_{2^{j'}\leq |\xi|\leq 2^{j'+1}} |U(\xi)|^{p}d\xi)^{\frac{q}{p}}
\{\int_{ 2^{j-1}\leq |\eta|\leq 2^{j+2}}  |V(\eta)|^p d\eta \}^{\frac{q}{p}}. $$
Hence
$$I_3\leq C\|U\|_{H^{\alpha}_{p,q}}^q \|V\|_{H^{\alpha}_{p,q}}^q.$$

(ii) For $q \leq  p$,  we have
$$\begin{array}{rl} & \{ \int_{ |\xi |\leq 2^{j+3}} |\xi|^{\lambda p} |U(\xi)|^{p}   d\xi \}^{\frac{q}{p}}\\
= &\{\sum\limits_{j'\leq j+2} \int_{ 2^{j'}\leq  |\xi |\leq 2^{j'+1}} |\xi|^{\lambda p} |U(\xi)|^{p}   d\xi \}^{\frac{q}{p}}  \\
\leq  &C \sum\limits_{j'\leq j+2} 2^{q \lambda j'} \{\int_{ 2^{j'}\leq  |\xi |\leq 2^{j'+1}} |U(\xi)|^{p}   d\xi \}^{\frac{q}{p}}.
\end{array}$$

Hence
$$\begin{array}{rcl} \{I_{U,V} \}^{\frac{q}{p}}
&\leq & 2^{qj(\frac{3}{p'}-\lambda) } \sum\limits_{j'\leq j+2} 2^{qj'\lambda} (\displaystyle\int_{2^{j'}\leq |\xi|\leq 2^{j'+1}} |U(\xi)|^{p}d\xi)^{\frac{q}{p}}
\{\int_{2^{j-1}\leq |\eta|\leq 2^{j+2}}  |V(\eta)|^{p} d\eta\}^{\frac{q}{p}}\\
&\leq & 2^{qj(\frac{3}{p'}-\alpha) } \sum\limits_{j'\leq j+2} 2^{qj'\alpha} (\displaystyle\int_{2^{j'}\leq |\xi|\leq 2^{j'+1}} |U(\xi)|^{p}d\xi)^{\frac{q}{p}}
\{\int_{2^{j-1}\leq |\eta|\leq 2^{j+2}}  |V(\eta)|^{p} d\eta\}^{\frac{q}{p}}  .\end{array}$$

Hence we get
$$ I_3 \leq  C
\sum\limits_{j\in \mathbb{Z}} 2^{ q\alpha j}
\sum\limits_{j'\leq j+2} 2^{qj'\alpha} (\displaystyle\int_{2^{j'}\leq |\xi|\leq 2^{j'+1}} |U(\xi)|^{p}d\xi)^{\frac{q}{p}}
\{\int_{ 2^{j-1}\leq |\eta|\leq 2^{j+2}}  |V(\eta)|^p d\eta \}^{\frac{q}{p}}. $$
Hence
$$I_3\leq C\|U\|_{H^{\alpha}_{p,q}}^q \|V\|_{H^{\alpha}_{p,q}}^q.$$

In the end of this section, we estimate $I_2$.
Take $\rho, \delta,\delta'$ satisfying that $\frac{p}{3}<\rho \leq \min (1, p-1), 0<\delta'< \delta<\frac{3\rho}{p}-1 $. We have
$$\begin{array}{rcl}
I_2&\leq & \sum\limits_{j\in \mathbb{Z}} 2^{qj(1-\frac{3}{p})}\{ \int_{2^{j}\leq |\xi|\leq 2^{j+1}} \sum\limits_{\tau\geq 0} 2^{ p \tau \delta'}
(\int _{2^{j+2+\tau}\leq |\eta|\leq 2^{j+3+\tau}} |U(\xi-\eta)|| V(\eta)| d\eta)^{p} d\xi\}^{\frac{q}{p}} \\
&\leq & \sum\limits_{j\in \mathbb{Z},\tau\geq 0} 2^{qj(1-\frac{3}{p})+ q\delta \tau}\{ \int_{2^{j}\leq |\xi|\leq 2^{j+1}}
(\int _{2^{j+2+\tau}\leq |\eta|\leq 2^{j+3+\tau}} |U(\xi-\eta)|| V(\eta)| d\eta)^{p} d\xi\}^{\frac{q}{p}} \\
&= & \sum\limits_{j\in \mathbb{Z},\tau\geq 0} 2^{qj(1-\frac{3}{p})+ q\delta \tau}\{ \int_{2^{j}\leq |\xi|\leq 2^{j+1}}
(\int _{2^{j+2+\tau}\leq |\eta|\leq 2^{j+3+\tau}} |U(\xi-\eta)|^{1-\rho} | V(\eta)| |U(\xi-\eta)|^{\rho} d\eta)^{p} d\xi\}^{\frac{q}{p}} \\
&\leq &\sum\limits_{j\in \mathbb{Z},\tau\geq 0} 2^{ qj(1-\frac{3}{p}) + q\tau\delta } \{\int_{2^{j}\leq |\xi|\leq 2^{j+1},2^{j+2+\tau}\leq |\eta|\leq 2^{j+3+\tau}}
|U(\xi-\eta) |^{p(1-\rho)} |V(\eta)|^p d\eta d\xi \}^{\frac{q}{p}}\\
&\times & \sup\limits_{2^{j}\leq |\xi|\leq 2^{j+1}}
\{ \int _{2^{j+2+\tau}\leq |\eta|\leq 2^{j+3+\tau}} |U(\xi-\eta)| ^{p'\rho} d\eta \}^{\frac{q}{p'}}.
\end{array}$$

Since we have
$$\begin{array}{rl}
&\int_{2^{j}\leq |\xi|\leq 2^{j+1} } |U(\xi-\eta) |^{p(1-\rho)} d\xi\\
\leq & \{\int_{2^{j}\leq |\xi|\leq 2^{j+1} } |U(\xi-\eta) |^{p} d\xi\}^{1-\rho}
\{ \int_{2^{j}\leq |\xi|\leq 2^{j+1} } d\xi \}^{\rho}\\
\leq & C 2^{3j\rho} \{\int_{2^{j}\leq |\xi|\leq 2^{j+1} } |U(\xi-\eta) |^{p} d\xi\}^{1-\rho},
\end{array}$$ hence
$$\begin{array}{rcl}
I_2
&\leq &\sum\limits_{j\in \mathbb{Z},\tau\geq 0} 2^{qj(1-\frac{3}{p})+\frac{3q\rho j}{p} + q\tau\delta }
(\int_{2^{j+\tau+2} -2^{j+1}\leq |\xi|\leq 2^{j+\tau+3}+ 2^{j+1}} | U(\xi) |^{p' \rho} d\xi)^{\frac{q}{p'}}\\
 &\times & \{ \int_{2^{j+2+\tau}\leq |\eta|\leq 2^{j+3+\tau}} |V(\eta)|^p d\eta
(\int_{2^{j}\leq |\xi|\leq 2^{j+1}}
|U(\xi-\eta)|^p d\xi)^{1-\rho} \}^{\frac{q}{p}} .
\end{array}$$

Since $2^{j+2+\tau}\leq |\eta|\leq 2^{j+3+\tau}$, we enlarge the integration region of $U(\xi-\eta)$ for the variable $\xi$
$$\int_{2^{j}\leq |\xi|\leq 2^{j+1}}
|U(\xi-\eta)|^p d\xi\leq \int_{2^{j+\tau+2} -2^{j+1}\leq |\xi|\leq 2^{j+\tau+3}+ 2^{j+1}}
|U(\xi)|^p d\xi.$$

Since $\rho \leq p-1$, applying Cauchy-Schwartz inequality, we get
$$\int_{2^{j+\tau+2} -2^{j+1}\leq |\xi|\leq 2^{j+\tau+3}+ 2^{j+1}}
| U(\xi)|^{p'\rho}  d\xi\leq C(\int_{2^{j+\tau+2} -2^{j+1}\leq |\xi|\leq 2^{j+\tau+3}+ 2^{j+1}}
| U(\xi)|^p  d\xi)^{\frac{p'\rho }{p}} 2^{3(j+\tau)(1-\frac{p'\rho}{p})}.$$
Hence
$$\begin{array}{rcl}
I_2 &\leq & \sum\limits_{j\in \mathbb{Z},\tau\geq 0} 2^{qj(1-\frac{3}{p})+ q\tau\delta+\frac{3q\rho}{p}j+ 3q(j+\tau) (\frac{1}{p'}-\frac{\rho}{p}) }
\{\int_{2^{j+\tau+2} -2^{j+1}\leq |\xi|\leq 2^{j+\tau+3}+ 2^{j+1}}  |U(\xi)|^p  d\xi\}^{\frac{q}{p}} \\
&\times &\{\int_{2^{j+2+\tau}\leq |\eta|\leq 2^{j+3+\tau}} |V(\eta)|^p d\eta \}^{\frac{q}{p}} \\
&= &\sum\limits_{j\in \mathbb{Z},\tau\geq 0} 2^{qj(2-\frac{3}{p})+q(j+\tau)(2-\frac{3}{p})+q\tau(1+\delta-\frac{3\rho}{p}) }
\{\int_{2^{j+\tau+2} -2^{j+1}\leq |\xi|\leq 2^{j+\tau+3}+ 2^{j+1}}  |U(\xi)|^p  d\xi \}^{\frac{q}{p}}\\
&\times & \{\int_{2^{j+2+\tau}\leq |\eta|\leq 2^{j+3+\tau}} |V(\eta)|^p d\eta \}^{\frac{q}{p}} .
\end{array}$$

For $ \frac{p}{3} <\rho \leq \min (1, p-1)$ and $0<p\delta \leq 3\rho-p$, we have
$$I_2\leq C \|U\|^{q}_{H^{\alpha}_{p,q}}\|V\|^{q}_{H^{\alpha}_{p,q}}.$$



\section{Symmetry property}
\setcounter{equation}{0}

In this paper, the main important structure of solution is the symmetry property of solution.
The divergence zero property and non-linearity property restrict the possibility of symmetry.
Symmetry property plays an important role in the study of ill-posedness in \cite{YYW}.
If the initial value has certain symmetry, whether the solution has still the same symmetry?

\subsection{Divergence zero and iterative algorithm}
\subsubsection{Divergence zero.}\label{sec:5.1.1}
We notice that divergence zero implies
\begin{lemma}
If $\nabla(u_1, u_2, u_3)=0$, then
\begin{equation}\label{ediv}\hat{u}_3 = \frac {- (\xi_1 \hat{u}_{1}  +\xi_2 \hat{u}_{2})}{\xi_3}.
\end{equation}
\end{lemma}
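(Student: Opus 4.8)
The plan is to take the Fourier transform of the divergence-free constraint and then solve the resulting pointwise algebraic relation for $\hat{u}_3$. First I would write the hypothesis $\nabla\cdot u=0$ in coordinates as $\partial_1 u_1+\partial_2 u_2+\partial_3 u_3=0$. Applying the Fourier transform termwise and using the standard symbol identity $\widehat{\partial_l f}(\xi)=i\xi_l\hat{f}(\xi)$ converts this differential constraint into the algebraic identity
\begin{equation*}
i\big(\xi_1\hat{u}_1+\xi_2\hat{u}_2+\xi_3\hat{u}_3\big)(\xi)=0
\end{equation*}
holding for (almost) every $\xi$. This is the same sign convention already in force in \eqref{fb} and \eqref{2.6}, where the factor $i$ is pulled out of the bilinear term, so the computation is fully consistent with the rest of the paper.

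Next I would cancel the nonzero scalar $i$ to get $\xi_1\hat{u}_1+\xi_2\hat{u}_2+\xi_3\hat{u}_3=0$ and then isolate the third component. On the set where $\xi_3\neq 0$ this rearranges immediately to the claimed formula
\begin{equation*}
\hat{u}_3=\frac{-(\xi_1\hat{u}_1+\xi_2\hat{u}_2)}{\xi_3}.
\end{equation*}
No further estimates or approximation arguments are needed; the statement is a direct algebraic consequence of the incompressibility condition on the Fourier side.

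The only point that warrants any care, and the nearest thing to an obstacle, is the division by $\xi_3$: the displayed formula is meaningful only where $\xi_3\neq 0$, that is, off the hyperplane $\{\xi_3=0\}$. Since this set has Lebesgue measure zero and the identity is used only to re-express $\hat{u}_3$ in terms of $\hat{u}_1,\hat{u}_2$ inside integrals and Herz norms, the exceptional set is negligible and causes no difficulty. I would therefore record the lemma with the tacit understanding that the relation holds for $\xi_3\neq 0$.
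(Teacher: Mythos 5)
Your proof is correct and is exactly the (omitted) argument the paper intends: the lemma is stated without proof there, being the immediate Fourier-side rewriting of $\partial_1u_1+\partial_2u_2+\partial_3u_3=0$ via $\widehat{\partial_l f}=i\xi_l\hat f$, followed by solving for $\hat u_3$ off the measure-zero set $\{\xi_3=0\}$. Your remark about the division by $\xi_3$ is a reasonable point of care that the paper leaves tacit.
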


(i) For compressible Navier-Stokes equations, there exists radial solutions. The incompressible Navier-Stokes equations have zero divergence property, so there is not radial solution. But there exists still sub-radial solution. Abidi \cite{Abi} and Abidi-Zhang \cite{AbiZ} identifies such particular symmetry with a general name axi-symmetry. Sub-radial is just a particular case of
the symmetry about the component of velocity field in this paper. See also remark \ref{re:2.2}.
The velocity field $u$ satisfies the symmetry property $X_1$
can produce that the vorticity for the third axi is zero.

(ii) As to the symmetry relative to the independent variable of velocity field, we note,
for a scale function $f(\xi)= a(\xi)+i b(\xi)$, $f(\xi)$ has symmetry property,
if its real part and imaginary part all  have axi-symmetry or anti-axi-symmetry.
There are 64 kinds of symmetry property for each scale function.
We can label each possibility with polynomial function $\xi^{\alpha}+ i \xi^{\beta}$,
where $\alpha=(\alpha_1, \alpha_2, \alpha_3), \beta=(\beta_1, \beta_2, \beta_3)\in \{0,1\}^{3}$.
We write them in the following way
$$Tf= Ta+ iTb= T\xi^{\alpha}+i T\xi^{\beta}.$$
For $\sigma=1,2,3$,
$\alpha_\sigma=0$  denotes function $a$ has symmetry for variable $\xi_\sigma$.
$\alpha_\sigma=1$  denotes function $a$ has antisymmetry for variable $\xi_\sigma$.
The same thing for $\beta$.

A vector function has $64^{3}=262144$ kinds of such symmetry, but by \eqref{ediv},
a velocity field $u$ with divergence zero property has $64^2$ kinds of symmetry property at most.
We found only that  the symmetry property $X_2$ can be preserved by the recursion formula (\ref{2.6})
up to this paper.
In another paper \cite{Yang}, I have found
there is only one kind of symmetry real valued initial data $u_0(x)$
can generate symmetric solution.

For vector field $u(x)$, if $u$ satisfies the symmetry property $X_2$, then
\begin{equation} \label{X2}
T(\hat{u}) = \left( \begin{aligned}
T(\xi_2\xi_3)\,&+&\!\! iT\xi_1 \\
T(\xi_1\xi_3)\,&+&\!\! iT\xi_2\\
T(\xi_1\xi_2)\,&+&\!\! iT\xi_3
 \end{aligned}\right )
\end{equation}

\subsection{Iterative algorithm}
To consider the action of non-linearity to symmetry,
we rewrite the quantities in (\ref{fb}) and reformulate (\ref{2.6}).
$\forall k=1,2,3$, denote
$$A_{k} (\hat{u}^{\tau},\hat{u}^{\tau}) = \sum\limits_{l} \xi_l (\hat{u}_{l}^{\tau}\ast \hat{u}_{k}^{\tau}),$$
Denote further
$$\begin{array}{c}A_{0} (\hat{u}^{\tau},\hat{u}^{\tau}) = \sum\limits_{l}  \sum\limits_{l'} \xi_l \xi_{l'} (\hat{u}_{l}^{\tau}\ast \hat{u}_{l'}^{\tau})\\
= \xi_1^2 (\hat{u}_1^{\tau}\ast \hat{u}_1^{\tau}) + 2\xi_1\xi_2 (\hat{u}_1^{\tau} \ast \hat{u}_2^{\tau})+ \xi_2^2 (\hat{u}_2^{\tau}\ast \hat{u}_2^{\tau})
+ 2\xi_1\xi_3 (\hat{u}_1^{\tau}\ast \hat{u}_3^{\tau})+2\xi_2\xi_3 (\hat{u}_2\ast \hat{u}_3^{\tau})+ \xi_3^2 (\hat{u}_3^{\tau}\ast \hat{u}_3^{\tau}).
\end{array}$$

Then equation (\ref{2.6}) can be written as following:
\begin{equation}\label{eq:5.25}
\hat{u}_k^{\tau+1}(t,\xi)= e^{-t\xi^2} \hat{u}_{0,k}(\xi)+ i \int^{t}_{0} e^{-(t-s)\xi^2} A_{k}(\hat{u}^{\tau}, \hat{u}^{\tau})ds
-\frac{i\xi_k}{\xi^2} \int^{t}_{0} e^{-(t-s)\xi^2} A_{0}(\hat{u}^{\tau}, \hat{u}^{\tau}) ds.
\end{equation}

In this section, we consider certain symmetry of iterative algorithm \eqref{eq:5.25} and reduction of iterative function for solution.
The first kind of symmetry is the symmetry of the component of velocity field, which is the generalization of Abidi-Zhang's sub-radial property.
The iterative algorithm (\ref{2.6}) has the heritability for such symmetry.
The second symmetry is the symmetry of coordinate variable.

\subsection{Symmetry with respect to the component of velocity field}
For compressible Navier-Stokes equations, we can consider radial solution. But for incompressible Navier-Stokes equations, because of zero divergence property, there exists not radial solution.
But many person have consider sub-radial solution with cylindrical coordinates.
Let $x=(x_1,x_2,x_3)\in \mathbb{R}^{3}$.
denote the cylindrical coordinates of $x$ by $(r,\theta, x_3)$ where
$r(x_1,x_2)  { \tiny \begin{array}{c} {\mbox { def}} \\=\end{array}} \sqrt{x_1^2+ x_2^2}$ and
$\theta(x_1, x_2) { \tiny \begin{array}{c} {\mbox { def}} \\=\end{array}} \arctan \frac{x_2}{x_1}$
with $r\in [0,\infty), \theta\in [0,2\pi]$. Denote
$e_r { \tiny \begin{array}{c} {\mbox { def}} \\=\end{array}}  (  \cos \theta, \sin\theta, 0), $
$e_\theta { \tiny \begin{array}{c} {\mbox { def}} \\=\end{array}}  ( -\sin\theta,  \cos \theta, 0) $.
Abidi and Zhang said a function is axisymmetry, if
$$u(t,x_1, x_2,x_3)= u_{r}(t,r,x_3) e_r+ u_3 (t,r,x_3) (0,0,1).$$
Such axisymmetry is called to be also sub-radial case.

\begin{remark}\label{re:2.2} The property (\ref{X1}) is more general than subradial case.
In fact,

(i) If one takes Fourier transform for $(\cos\theta u_{r}(t,r,x_3), \sin\theta u_{r}(t,r,x_3), u_3 (t,r,x_3))$ in the ordinary axi system $(x_1,x_2,x_3)$, we can find that subradial case satisfies property (\ref{X1}).

(ii) For $\tau\in\mathbb{R}_{+}$,
we take $$\hat{u}(t, \xi_1, \xi_2,\xi_3) = (\xi_1 g(t, |\xi_{1}|^{\tau} + |\xi_2|^{\tau}, \xi_3), \xi_2 g(t, |\xi_{1}|^{\tau} + |\xi_2|^{\tau}, \xi_3), h(t, \xi_1,\xi_2,\xi_3)),$$
where $h$ is obtained by zero divergence property (\ref{ediv}). Evidently, $u(t,x)$ satisfies property (\ref{X1}), but if $\tau\neq 2$, it is not a subradial function.
\end{remark}

In this paper, we consider the heritability of property (\ref{X1}).

\begin{theorem} \label{S1}
If $u^{0}$ satisfies (\ref{X1}), then $\forall \tau\geq 0$, $u^{\tau+1}$ satisfies (\ref{X1}).
\end{theorem}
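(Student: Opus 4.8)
The plan is to prove the statement by induction on $\tau$, showing that each step of the recursion (\ref{eq:5.25}) preserves the symmetry (\ref{X1}). The crucial first move is to repackage (\ref{X1}) into an \emph{equivariance} statement. Let $R$ denote the transposition of the first two variables, $R\xi=(\xi_2,\xi_1,\xi_3)$, and let $R$ also act on the index set $\{1,2,3\}$ by $R(1)=2$, $R(2)=1$, $R(3)=3$. I claim that for a divergence-free field, (\ref{X1}) is equivalent to the single family of identities
\begin{equation}\label{equiv}
\hat u_l(R\xi)=\hat u_{R(l)}(\xi),\qquad l=1,2,3.
\end{equation}
Indeed, for $l=1,2$ this is exactly (\ref{X1}); for $l=3$ one evaluates (\ref{ediv}) at $R\xi$ and uses the $l=1,2$ identities together with $(R\xi)_1=\xi_2,(R\xi)_2=\xi_1,(R\xi)_3=\xi_3$ to deduce $\hat u_3(R\xi)=\hat u_3(\xi)$. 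Thus the inductive hypothesis I would carry is that $\hat u^\tau$ is divergence-free and satisfies (\ref{equiv}). The base case $\tau=0$ holds because $\hat u^0=e^{-t|\xi|^2}\hat u_0$ inherits both properties from $u_0$, the Gaussian factor being $R$-invariant.

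Next I would record two transformation rules under $R$. Since $R$ is a measure-preserving involution, the substitution $\eta\mapsto R\eta$ in the convolution, combined with (\ref{equiv}) applied to both factors, yields
\begin{equation}\label{conv-rule}
(\hat u_l\ast\hat u_{l'})(R\xi)=(\hat u_{R(l)}\ast\hat u_{R(l')})(\xi).
\end{equation}
The coordinate weights transform by $(R\xi)_l=\xi_{R(l)}$. Inserting both facts into the definitions of $A_0$ and $A_k$ and relabelling the summation indices through the bijection $R$, I expect
\begin{equation}\label{A-rule}
A_0(R\xi)=A_0(\xi),\qquad A_{R(k)}(R\xi)=A_k(\xi),
\end{equation}
the first because $A_0$ is a full double sum symmetric in its labels, the second because the free index $k$ in $A_k$ is carried to $R(k)$ while the summed index is relabelled away.

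With (\ref{A-rule}) in hand the conclusion is immediate. Evaluating (\ref{eq:5.25}) for $k=2$ at $R\xi$ and using $|R\xi|^2=|\xi|^2$, $(R\xi)_2=\xi_1$, $\hat u_{0,2}(R\xi)=\hat u_{0,1}(\xi)$, $A_2(R\xi)=A_1(\xi)$ and $A_0(R\xi)=A_0(\xi)$, each term matches the corresponding term of (\ref{eq:5.25}) for $k=1$ evaluated at $\xi$; hence $\hat u_2^{\tau+1}(R\xi)=\hat u_1^{\tau+1}(\xi)$, which is precisely (\ref{X1}) for $u^{\tau+1}$. Finally I would note that (\ref{eq:5.25}) automatically produces a divergence-free field: contracting with $\xi_k$ and using $\sum_k\xi_k A_k=A_0$ together with $\sum_k\xi_k^2=|\xi|^2$ cancels the two nonlinear terms against one another, so the full inductive package (\ref{equiv}) is restored at level $\tau+1$.

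The main obstacle is the bookkeeping in (\ref{conv-rule})--(\ref{A-rule}): one must track simultaneously how $R$ permutes the coordinate weights $\xi_l$ and how, through (\ref{equiv}), it permutes the velocity components inside the convolutions, and verify that the two permutations compose to the identity after relabelling. The point that genuinely requires the divergence-free structure, and is easy to overlook, is the $l=3$ case of (\ref{equiv}): without the symmetry $\hat u_3(R\xi)=\hat u_3(\xi)$ the rules (\ref{A-rule}) fail, so it is essential that divergence-freeness be propagated alongside (\ref{X1}) rather than treated as an afterthought.
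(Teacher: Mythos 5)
Your proof is correct, and it reaches the conclusion by a genuinely different organization than the paper's. The paper uses the divergence-free relation \eqref{ediv} to \emph{eliminate} $\hat u_3$ from the nonlinearity, which forces a decomposition of $A_1,A_2$ into two pieces each and of $A_0$ into six pieces ($A^1_k$, $A^2_k$, $A^j_{0,l}$), followed by a term-by-term change of variables $\eta_1\leftrightarrow\eta_2$ in each convolution (Lemmas 5.3 and 5.4). You instead use \eqref{ediv} to \emph{deduce} the missing third identity $\hat u_3(R\xi)=\hat u_3(\xi)$, so that all three components satisfy the single equivariance law $\hat u_l(R\xi)=\hat u_{R(l)}(\xi)$; one change of variables in the convolution then gives $A_0(R\xi)=A_0(\xi)$ and $A_{R(k)}(R\xi)=A_k(\xi)$ at once, and the conclusion drops out of \eqref{eq:5.25} without any splitting. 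Your route is shorter and less error-prone (the paper's eight verifications are replaced by one index relabelling), and it has the additional merit of making explicit that divergence-freeness itself propagates through the iteration via $\sum_k\xi_k A_k=A_0$ --- a fact the paper uses tacitly every time it applies \eqref{ediv} to $u^{\tau}$ rather than only to $u_0$. The one point to state cleanly in a final write-up is the direction of the equivariance: \eqref{X1} reads $\hat u_1(\xi)=\hat u_2(R\xi)$, which upon replacing $\xi$ by $R\xi$ gives $\hat u_1(R\xi)=\hat u_2(\xi)$ and $\hat u_2(R\xi)=\hat u_1(\xi)$; with that spelled out, every step of your argument checks.
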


By applying zero divergence property, we regroup the term in $A_{k}(\hat{u}^{\tau}, \hat{u}^{\tau})(k=0,1,2,3)$.
$\forall k=1,2$, we divide each $A_{k}(\hat{u}^{\tau}, \hat{u}^{\tau})$ into two terms:
$$A^{1}_{k}(\hat{u}^{\tau}, \hat{u}^{\tau})= \sum\limits_{l=1,2} \xi_l (\hat{u}_{l}^{\tau}\ast \hat{u}_{k}^{\tau}),$$
$$A^{2}_{k}(\hat{u}^{\tau}, \hat{u}^{\tau})= -\xi_3\sum\limits_{l=1,2} \int \frac{\xi_{l}-\eta_l}{\xi_3-\eta_3}  (\hat{u}_{l}^{\tau}(\xi-\eta) \ast \hat{u}_{k}^{\tau})(\eta) d\eta .$$

Further,
$$\begin{array}{rl}
A_{0} (\hat{u}^{\tau}, \hat{u}^{\tau})=\!\! & \!\!\xi_{1}^{2} (\hat{u}_{1}^{\tau} \ast \hat{u}_{1}^{\tau})+ \xi_{2}^{2} (\hat{u}_{2}^{\tau} \ast \hat{u}_{2}^{\tau})+2\xi_{1}\xi_{2} (\hat{u}_{1}^{\tau} \ast \hat{u}_{2}^{\tau})\\
&-2\xi_1\xi_{3}  \int \sum\limits_{l=1,2} \frac{\xi_l-\eta_l} {\xi_3-\eta_3} \hat{u}_l^{\tau}(\xi-\eta)\hat{u}_{1}(\eta) d\eta
-2 \xi_{2} \xi_3 \int \sum\limits_{l=1,2} \frac{\xi_l-\eta_l} {\xi_3-\eta_3} \hat{u}_l^{\tau}(\xi-\eta)\hat{u}_{2}(\eta) d\eta \\
&+ \xi_3^2  \int \sum\limits_{l,l'=1,2} \frac{\xi_l-\eta_l} {\xi_3-\eta_3} \hat{\eta_{l'}}{\eta_3} \hat{u}_l^{\tau}(\xi-\eta)\hat{u}_{l'}(\eta) d\eta.
\end{array}$$
We divide $A_{0}(\hat{u}^{\tau}, \hat{u}^{\tau})$ into six terms:
$$A^{1}_{0,0} (\hat{u}^{\tau}, \hat{u}^{\tau})= \xi_{1}^{2} (\hat{u}_{1}^{\tau} \ast \hat{u}_{1}^{\tau})+ \xi_{2}^{2} (\hat{u}_{2}^{\tau} \ast \hat{u}_{2}^{\tau}),$$
$$A^{1}_{0,1} (\hat{u}^{\tau}, \hat{u}^{\tau})= 2\xi_{1}\xi_{2} (\hat{u}_{1}^{\tau} \ast \hat{u}_{2}^{\tau}),$$
$$A^{2}_{0,0} (\hat{u}^{\tau}, \hat{u}^{\tau})= -2\xi_{3} \{\xi_{1} \int \frac{\xi_1-\eta_1} {\xi_3-\eta_3} \hat{u}_1^{\tau}(\xi-\eta)\hat{u}_{1}(\eta) d\eta
+\xi_{2} \int \frac{\xi_2-\eta_2} {\xi_3-\eta_3} \hat{u}_2^{\tau}(\xi-\eta)\hat{u}_{2}(\eta) d\eta\},  $$
$$A^{2}_{0,1} (\hat{u}^{\tau}, \hat{u}^{\tau})= -2\xi_{3} \{\xi_{1} \int \frac{\xi_2-\eta_2} {\xi_3-\eta_3} \hat{u}_2^{\tau}(\xi-\eta)\hat{u}_{1}(\eta) d\eta
+\xi_{2} \int \frac{\xi_1-\eta_1} {\xi_3-\eta_3} \hat{u}_1^{\tau}(\xi-\eta)\hat{u}_{2}(\eta) d\eta\},  $$
$$A^{3}_{0,0} (\hat{u}^{\tau}, \hat{u}^{\tau})= \xi_{3}^2 \{  \int \frac{\xi_1-\eta_1} {\xi_3-\eta_3}\frac{\eta_1}{\eta_3} \hat{u}_1^{\tau}(\xi-\eta)\hat{u}_{1}(\eta) d\eta
+  \int \frac{\xi_2-\eta_2} {\xi_3-\eta_3} \frac{\eta_2}{\eta_3} \hat{u}_2^{\tau}(\xi-\eta)\hat{u}_{2}(\eta) d\eta\},  $$
$$A^{3}_{0,1} (\hat{u}^{\tau}, \hat{u}^{\tau})= \xi_{3}^2 \{  \int \frac{\xi_1-\eta_1} {\xi_3-\eta_3}\frac{\eta_2}{\eta_3} \hat{u}_1^{\tau}(\xi-\eta)\hat{u}_{2}(\eta) d\eta
+  \int \frac{\xi_2-\eta_2} {\xi_3-\eta_3} \frac{\eta_1}{\eta_3} \hat{u}_2^{\tau}(\xi-\eta)\hat{u}_{1}(\eta) d\eta\}.  $$
Hence,
$$\begin{array}{rl}
& A_{0} (\hat{u}^{\tau}, \hat{u}^{\tau})\\
\equiv \!\!&\!\! A^{1}_{0,0} (\hat{u}^{\tau}, \hat{u}^{\tau})+ A^{1}_{0,1} (\hat{u}^{\tau}, \hat{u}^{\tau})+ A^{2}_{0,0} (\hat{u}^{\tau}, \hat{u}^{\tau})+ A^{2}_{0,1} (\hat{u}^{\tau}, \hat{u}^{\tau})+ A^{3}_{0,0} (\hat{u}^{\tau}, \hat{u}^{\tau})+ A^{3}_{0,1} (\hat{u}^{\tau}, \hat{u}^{\tau}).
\end{array}$$

(i) For the first two terms $A^{1}_{2}(\hat{u}^{\tau}, \hat{u}^{\tau})(\xi_1,\xi_2,\xi_3)$ and $A^{2}_{2}(\hat{u}^{\tau}, \hat{u}^{\tau})(\xi_1,\xi_2,\xi_3)$, we have
\begin{lemma}\label{S1.1}
If $u^{\tau}$ satisfies (\ref{X1}), then
\begin{equation}\label{eq:S1.1} A^{1}_{2}(\hat{u}^{\tau}, \hat{u}^{\tau})(\xi_2,\xi_1,\xi_3)= A^{1}_{1}(\hat{u}^{\tau}, \hat{u}^{\tau})(\xi_1,\xi_2,\xi_3).
\end{equation}
\begin{equation}\label{eq:S1.2}A^{2}_{2}(\hat{u}^{\tau}, \hat{u}^{\tau})(\xi_2,\xi_1,\xi_3)= A^{2}_{1}(\hat{u}^{\tau}, \hat{u}^{\tau})(\xi_1,\xi_2,\xi_3).
\end{equation}
\end{lemma}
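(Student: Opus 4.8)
The plan is to encode property (\ref{X1}) as a single operator identity and then push it through the convolutions and integrals by one change of variables. Introduce the transposition $\sigma$ acting on functions on $\mathbb{R}^3$ by $(\sigma f)(\xi_1,\xi_2,\xi_3)=f(\xi_2,\xi_1,\xi_3)$. Then $\sigma$ is a volume-preserving linear involution ($\sigma^2=\mathrm{id}$, $|\det\sigma|=1$), and (\ref{X1}) reads precisely $\sigma\hat u_1=\hat u_2$, equivalently $\sigma\hat u_2=\hat u_1$. Because $\sigma$ is linear it commutes with convolution: for any $f,g$ one has $\sigma(f\ast g)=(\sigma f)\ast(\sigma g)$, which follows from writing $(\sigma f\ast\sigma g)(\xi)=\int f(\sigma\xi-\sigma\eta)g(\sigma\eta)\,d\eta$ and substituting $\mu=\sigma\eta$. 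These three facts, together with commutativity of convolution, are the entire engine of the proof, so no estimates are needed.

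For (\ref{eq:S1.1}) I would expand
$$A^{1}_{2}(\hat u^{\tau},\hat u^{\tau})(\sigma\xi)=\xi_2\,(\hat u_1^{\tau}\ast\hat u_2^{\tau})(\sigma\xi)+\xi_1\,(\hat u_2^{\tau}\ast\hat u_2^{\tau})(\sigma\xi),$$
noting that evaluating at $\sigma\xi=(\xi_2,\xi_1,\xi_3)$ turns the prefactors $\zeta_1,\zeta_2$ into $\xi_2,\xi_1$. Applying $\sigma(f\ast g)=(\sigma f)\ast(\sigma g)$ together with $\sigma\hat u_2^{\tau}=\hat u_1^{\tau}$ converts $(\hat u_1^{\tau}\ast\hat u_2^{\tau})(\sigma\xi)$ into $(\hat u_2^{\tau}\ast\hat u_1^{\tau})(\xi)$ and $(\hat u_2^{\tau}\ast\hat u_2^{\tau})(\sigma\xi)$ into $(\hat u_1^{\tau}\ast\hat u_1^{\tau})(\xi)$. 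Commutativity of convolution then matches the outcome term by term against $A^{1}_{1}(\hat u^{\tau},\hat u^{\tau})(\xi)=\xi_1(\hat u_1^{\tau}\ast\hat u_1^{\tau})(\xi)+\xi_2(\hat u_1^{\tau}\ast\hat u_2^{\tau})(\xi)$, which closes the first identity.

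For (\ref{eq:S1.2}) the same strategy applies, but now the rational weight $\frac{\xi_l-\eta_l}{\xi_3-\eta_3}$ must be tracked. I would evaluate $A^{2}_{2}(\hat u^{\tau},\hat u^{\tau})$ at $\sigma\xi$ and substitute $\eta=\sigma\mu$ in each integral at once. The crucial observations are that the third coordinate is fixed by $\sigma$, so the outer factor $\xi_3$ and the denominator $\xi_3-\eta_3=\xi_3-\mu_3$ are unchanged; the numerator becomes $(\sigma\xi)_l-\eta_l=\xi_l-\mu_l$ after the substitution; and the arguments convert by $\hat u_l^{\tau}(\sigma\xi-\eta)=(\sigma\hat u_l^{\tau})(\xi-\mu)$ and $\hat u_2^{\tau}(\eta)=(\sigma\hat u_2^{\tau})(\mu)=\hat u_1^{\tau}(\mu)$. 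Reading off $\sigma\hat u_1^{\tau}=\hat u_2^{\tau}$ and $\sigma\hat u_2^{\tau}=\hat u_1^{\tau}$ turns the two integrals into exactly the two integrals defining $A^{2}_{1}(\hat u^{\tau},\hat u^{\tau})(\xi)$. I expect the only delicate point to be bookkeeping: making sure the summed slot index $l$, the swapped first two coordinates, the weight, the convolution argument, and the remaining factor are all handled consistently; once the single substitution $\eta=\sigma\mu$ is applied everywhere simultaneously, both identities collapse to term-by-term matching with the right-hand sides.
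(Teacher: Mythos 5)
Your proposal is correct and is essentially the paper's own argument: the paper likewise applies $\hat u_1^{\tau}(\zeta_1,\zeta_2,\zeta_3)=\hat u_2^{\tau}(\zeta_2,\zeta_1,\zeta_3)$ to each factor and then swaps the integration variables $\eta_1\leftrightarrow\eta_2$ inside the convolution integrals, which is exactly your substitution $\eta=\sigma\mu$ packaged together with $\sigma(f\ast g)=(\sigma f)\ast(\sigma g)$ and commutativity of convolution. The one bookkeeping point (which you flag yourself) is that after the substitution the numerator is $(\sigma\xi)_l-\eta_l=(\xi-\mu)_{\sigma(l)}$ rather than $\xi_l-\mu_l$, so the sum over $l$ must be relabelled before matching term by term with $A^{2}_{1}$; this is harmless and does not affect the conclusion.
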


\begin{proof} First, we have
$$\begin{array}{rl}
&A^{1}_{2}(\hat{u}^{\tau}, \hat{u}^{\tau})(\xi_2,\xi_1,\xi_3)\\
= & \xi_2\int \hat{u}_1^{\tau}(\xi_2-\eta_1, \xi_1-\eta_2, \xi_3-\eta_3) \hat{u}_1^{\tau} (\eta_1,\eta_2, \eta_3) d\eta\\
&+ \xi_1\int \hat{u}_2^{\tau}(\xi_2-\eta_1, \xi_1-\eta_2, \xi_3-\eta_3) \hat{u}_1^{\tau} (\eta_1,\eta_2, \eta_3) d\eta\\
= &\xi_2\int \hat{u}_2^{\tau}(\xi_1-\eta_2, \xi_2-\eta_1, \xi_3-\eta_3) \hat{u}_1^{\tau} (\eta_2,\eta_1, \eta_3) d\eta\\
&+ \xi_2\int \hat{u}_1^{\tau}(\xi_1-\eta_2, \xi_2-\eta_1, \xi_3-\eta_3) \hat{u}_1^{\tau} (\eta_2,\eta_1, \eta_3) d\eta\\
=&A^{1}_{1}(\hat{u}^{\tau}, \hat{u}^{\tau})(\xi_1,\xi_2,\xi_3).
\end{array}$$
Further, we have
$$\begin{array}{rl}
&A^{2}_{2}(\hat{u}^{\tau}, \hat{u}^{\tau})(\xi_2,\xi_1,\xi_3)\\
= &-\xi_3 \int \frac{\xi_2-\eta_1}{\xi_3-\eta_3} \hat{u}_1^{\tau}(\xi_2-\eta_1, \xi_1-\eta_2, \xi_3-\eta_3) \hat{u}_2^{\tau} (\eta_1,\eta_2, \eta_3) d\eta\\
&- \xi_3\int \frac{\xi_1-\eta_2}{\xi_3-\eta_3}\hat{u}_2^{\tau}(\xi_2-\eta_1, \xi_1-\eta_2, \xi_3-\eta_3) \hat{u}_2^{\tau} (\eta_1,\eta_2, \eta_3) d\eta\\
=& -\xi_3\int \frac{\xi_2-\eta_1}{\xi_3-\eta_3}\hat{u}_2^{\tau}(\xi_1-\eta_2, \xi_2-\eta_1, \xi_3-\eta_3) \hat{u}_1^{\tau} (\eta_2,\eta_1, \eta_3) d\eta\\
&- \xi_3 \int \frac{\xi_1-\eta_2}{\xi_3-\eta_3}\hat{u}_1^{\tau}(\xi_1-\eta_1, \xi_2-\eta_2, \xi_3-\eta_3) \hat{u}_1^{\tau} (\eta_2,\eta_1, \eta_3) d\eta\\
=&A^{2}_{1}(\hat{u}^{\tau}, \hat{u}^{\tau})(\xi_1,\xi_2,\xi_3).
\end{array}$$
\end{proof}

(ii) For the rest six terms, we have
\begin{lemma} \label{S1.2}
If $u^{\tau}$ satisfies (\ref{X1}), then for $k=1,2,3$ and $l=0,1$, we have
\begin{equation}\label{eq:S1.3}A^{k}_{0,l}(\hat{u}^{\tau}, \hat{u}^{\tau})(\xi_2,\xi_1,\xi_3) = A^{k}_{0,l}(\hat{u}^{\tau}, \hat{u}^{\tau})(\xi_1,\xi_2,\xi_3).
\end{equation}
\end{lemma}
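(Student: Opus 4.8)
The plan is to establish each of the six identities packaged in \eqref{eq:S1.3} by the very mechanism already deployed in Lemma \ref{S1.1}: I would feed the symmetry relation \eqref{X1} into every convolution integral defining $A^{k}_{0,l}$, and then absorb the resulting coordinate interchange by the change of variable $\eta_1\leftrightarrow\eta_2$ (keeping $\eta_3$ fixed, so the Jacobian is $1$). The relation \eqref{X1}, namely $\hat{u}_1(\xi_1,\xi_2,\xi_3)=\hat{u}_2(\xi_2,\xi_1,\xi_3)$, says exactly that swapping the first two frequency slots carries the first velocity component into the second and conversely; this is precisely the input needed to match the swap $\xi_1\leftrightarrow\xi_2$ imposed on the left-hand side of \eqref{eq:S1.3}.

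First I would record a structural observation that trivializes much of the bookkeeping: under the swap $\xi_1\leftrightarrow\xi_2$ together with the substitution $\eta_1\leftrightarrow\eta_2$, the prefactor $\xi_3$ (and $\xi_3^2$), the denominator $\xi_3-\eta_3$, and the denominator $\eta_3$ are all left unchanged. Hence the entire third-coordinate structure passes through inertly, and only the prefactors $\xi_1,\xi_2$, the numerators $\xi_l-\eta_l$, and the factors $\eta_{l'}$ actually participate. As a model case, starting from the summand $\xi_{1}^{2}(\hat{u}_1^{\tau}\ast\hat{u}_1^{\tau})$ inside $A^{1}_{0,0}$, applying \eqref{X1} to both convolution factors and then substituting $\eta_1\leftrightarrow\eta_2$ reproduces the companion summand $\xi_{2}^{2}(\hat{u}_2^{\tau}\ast\hat{u}_2^{\tau})$ evaluated at the swapped point, using also the commutativity of $\ast$.

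What then remains is a term-by-term re-pairing. For the three diagonal terms $A^{1}_{0,0},A^{2}_{0,0},A^{3}_{0,0}$, the $\hat{u}_1^{\tau}\ast\hat{u}_1^{\tau}$ summand and the $\hat{u}_2^{\tau}\ast\hat{u}_2^{\tau}$ summand are carried into one another, so their sum is fixed; for the three off-diagonal terms $A^{1}_{0,1},A^{2}_{0,1},A^{3}_{0,1}$, whose two summands already mix $\hat{u}_1^{\tau}$ and $\hat{u}_2^{\tau}$, the same substitution interchanges the two summands, again fixing the sum. In every case this yields \eqref{eq:S1.3}. I expect the only delicate point to be the terms $A^{3}_{0,l}$, since their integrands carry two rational weights, $\frac{\xi_l-\eta_l}{\xi_3-\eta_3}$ and $\frac{\eta_{l'}}{\eta_3}$, and one must verify that after \eqref{X1} and the substitution $\eta_1\leftrightarrow\eta_2$ \emph{both} weights are converted into exactly the pair of weights occurring in the partner summand. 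This is purely index bookkeeping, but it is the one place where a stray index could spoil the matching, so I would write those two cases out in full rather than by analogy.
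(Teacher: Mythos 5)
Your proposal is correct and follows essentially the same route as the paper: the paper's proof also applies \eqref{X1} to each convolution factor and then substitutes $\eta_1\leftrightarrow\eta_2$, writing out only the case $A^{1}_{0,0}$ in detail and declaring the remaining five terms "similar." Your explicit identification of the pairing structure (diagonal summands exchanged, off-diagonal summands exchanged) and your check that the rational weights in $A^{3}_{0,l}$ transform into the partner weights is exactly the bookkeeping the paper leaves implicit.
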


\begin{proof}
For the six term of $A_{0}(\hat{u}^{\tau}, \hat{u}^{\tau})(\xi_2,\xi_1,\xi_3)$, we consider only $A^{1}_{0,0}(\hat{u}^{\tau}, \hat{u}^{\tau})$. The other five term $A^{1}_{0,1}(\hat{u}^{\tau}, \hat{u}^{\tau}), A^{2}_{0,0}(\hat{u}^{\tau}, \hat{u}^{\tau}), A^{2}_{0,1}(\hat{u}^{\tau}, \hat{u}^{\tau}),
A^{3}_{0,0}(\hat{u}^{\tau}, \hat{u}^{\tau})$ and $A^{3}_{0,1}(\hat{u}^{\tau}, \hat{u}^{\tau})$ can be verified similarly.

For $A^{1}_{0,0}(\hat{u}^{\tau}, \hat{u}^{\tau})$, we have
$$\begin{array}{rl}
&A^{1}_{0,0}(\hat{u}^{\tau}, \hat{u}^{\tau})(\xi_2,\xi_1,\xi_3)\\
=& \xi_2^2 \int  \hat{u}_1^{\tau}(\xi_2-\eta_1, \xi_1-\eta_2, \xi_3-\eta_3) \hat{u}_1^{\tau} (\eta_1,\eta_2, \eta_3) d\eta\\
&+ \xi_1^2 \int  \hat{u}_2^{\tau}(\xi_2-\eta_1, \xi_1-\eta_2, \xi_3-\eta_3) \hat{u}_2^{\tau} (\eta_1,\eta_2, \eta_3) d\eta\\
= &\xi_2^2 \int  \hat{u}_2^{\tau}(\xi_1-\eta_2, \xi_2-\eta_1, \xi_3-\eta_3) \hat{u}_2^{\tau} (\eta_2,\eta_1, \eta_3) d\eta\\
&+ \xi_1^2 \int  \hat{u}_1^{\tau}(\xi_1-\eta_2, \xi_2-\eta_1, \xi_3-\eta_3) \hat{u}_1^{\tau} (\eta_2,\eta_1, \eta_3) d\eta.
\end{array}$$
Change $\eta_1$ and $\eta_2$ in the above last equality, we get that
$$A^{1}_{0,0}(\hat{u}^{\tau}, \hat{u}^{\tau})(\xi_2,\xi_1,\xi_3)= A^{1}_{0,0}(\hat{u}^{\tau}, \hat{u}^{\tau})(\xi_1,\xi_2,\xi_3).$$
\end{proof}

Now we prove theorem \ref{S1}.
\begin{proof} By lemmas \ref{S1.1} and \ref{S1.2},
$\forall \tau\geq 0$, if $u^{\tau}$ satisfies (\ref{X1}), then the eight terms satisfies equations (\ref{eq:S1.1}), (\ref{eq:S1.2}) and (\ref{eq:S1.3}).
Hence $u^{\tau+1}$ satisfies (\ref{X1}).
\end{proof}

\subsection{Symmetry of independent variable}

For a complex function $f$, if one of its real part or its imaginary part has no axi-symmetry property, then we say that $f$ has no symmetry property.
For two function $f_1$ and $f_2$, if and only if $f_1$ and $f_2$ have the same symmetry, we can consider the symmetry of the sum of $f_1$ and $f_2$. Further, $T(f_1+f_2)= Tf_1=Tf_2$.

For the product of two real function $f$ and $g$, it is easy to see
\begin{lemma} Given $\alpha, \beta\in \{0,1\}^{3}.$
If $Tf=T\xi^{\alpha}$ and $Tg=T\xi^{\beta}$, then
$$T(fg)= T\xi^{\alpha+\beta},$$
where $\alpha_k+\beta_k \, (k=1,2,3) $, modulate 2, belongs to $\{0,1\}$.
\end{lemma}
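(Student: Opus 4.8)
The plan is to encode each one-variable symmetry as an eigenvalue relation for a reflection operator and then exploit that such reflections are multiplicative. For $\sigma\in\{1,2,3\}$ I would introduce the reflection $R_\sigma$ acting on real-valued functions on $\mathbb{R}^3$ which replaces $\xi_\sigma$ by $-\xi_\sigma$ and leaves the other two coordinates unchanged. The discussion preceding this lemma says that $\alpha_\sigma=0$ records symmetry of $f$ in $\xi_\sigma$ and $\alpha_\sigma=1$ records antisymmetry; first I would observe that this is exactly the statement $R_\sigma f=(-1)^{\alpha_\sigma}f$, and that the monomial $\xi^\alpha=\xi_1^{\alpha_1}\xi_2^{\alpha_2}\xi_3^{\alpha_3}$ obeys the same relation, since $R_\sigma$ multiplies the factor $\xi_\sigma^{\alpha_\sigma}$ by $(-1)^{\alpha_\sigma}$ and touches nothing else. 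Thus $Tf=T\xi^\alpha$ is equivalent to the three eigenrelations $R_\sigma f=(-1)^{\alpha_\sigma}f$ for $\sigma=1,2,3$, and likewise $Tg=T\xi^\beta$ to $R_\sigma g=(-1)^{\beta_\sigma}g$.

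The key step is the multiplicativity of each reflection: since $R_\sigma$ merely substitutes $-\xi_\sigma$ into the argument, it distributes over pointwise products, $R_\sigma(fg)=(R_\sigma f)(R_\sigma g)$. Combining this with the two eigenrelations gives, for every $\sigma$,
\begin{equation*}
R_\sigma(fg)=(-1)^{\alpha_\sigma}(-1)^{\beta_\sigma}\,fg=(-1)^{\alpha_\sigma+\beta_\sigma}\,fg.
\end{equation*}
Because $(-1)^{\alpha_\sigma+\beta_\sigma}$ depends only on the residue of $\alpha_\sigma+\beta_\sigma$ modulo $2$, the product $fg$ is symmetric in $\xi_\sigma$ exactly when $\alpha_\sigma+\beta_\sigma$ is even and antisymmetric exactly when it is odd. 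Running this over $\sigma=1,2,3$ identifies the symmetry type of $fg$ with that of $\xi^{\alpha+\beta}$, where $\alpha+\beta$ is taken componentwise modulo $2$; this is precisely the assertion $T(fg)=T\xi^{\alpha+\beta}$.

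I do not expect a genuine obstacle here, since the content is the elementary fact that signs multiply. The only points needing care are bookkeeping ones: the three reflections $R_1,R_2,R_3$ act on disjoint coordinates and hence commute, so the symmetry in each direction is well defined independently of the others; and the whole argument relies on $f$ and $g$ being \emph{real-valued}, as the statement assumes, so that $Tf$ is unambiguously determined by the even/odd behaviour of $f$ itself rather than by treating its real and imaginary parts separately.
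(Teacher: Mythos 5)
Your proof is correct. Note that the paper itself offers no proof of this lemma at all --- it is introduced with ``it is easy to see'' and left to the reader --- so there is nothing to compare against; your argument fills a gap rather than replicating one. The reflection-operator formulation is a clean way to do it: identifying $Tf=T\xi^{\alpha}$ with the eigenrelations $R_{\sigma}f=(-1)^{\alpha_{\sigma}}f$ and using that $R_{\sigma}(fg)=(R_{\sigma}f)(R_{\sigma}g)$ reduces the claim to the multiplicativity of signs, exactly as you say. It is also consistent with how the paper handles the companion convolution lemma, whose proof is a coordinate-by-coordinate change of variables; your pointwise-product version is the simpler analogue of that computation. Your closing remarks --- that the three reflections commute and that real-valuedness is what makes $Tf$ well defined without splitting into real and imaginary parts --- are the right cautions and match the conventions set up in the surrounding discussion of the paper.
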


For the convolution of two real functions $f$ and $g$,
\begin{lemma}  Given $\alpha, \beta\in \{0,1\}^{3}.$
If $Tf=T\xi^{\alpha}$ and $Tg=T\xi^{\beta}$, then $T(f*g)= T\xi^{\alpha+\beta}$.
\end{lemma}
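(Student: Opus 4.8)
The plan is to verify the claimed parity of $f*g$ one coordinate at a time, reducing everything to the single-variable reflection symmetry that defines the operator $T$. Recall that the convolution is $(f*g)(\xi)=\int f(\xi-\eta)\,g(\eta)\wrt\eta$, and that the symmetry statement $T(f*g)=T\xi^{\alpha+\beta}$ is, by definition of $T$, a statement about the parity of $f*g$ in each variable $\xi_\sigma$ $(\sigma=1,2,3)$.

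First I would fix $\sigma\in\{1,2,3\}$ and introduce the reflection $R_\sigma$ that sends $\xi_\sigma\mapsto-\xi_\sigma$ and fixes the other two coordinates. The hypotheses $Tf=T\xi^{\alpha}$ and $Tg=T\xi^{\beta}$ translate, coordinatewise, into the parity identities $f(R_\sigma\xi)=(-1)^{\alpha_\sigma}f(\xi)$ and $g(R_\sigma\xi)=(-1)^{\beta_\sigma}g(\xi)$; that is, $f$ is even in $\xi_\sigma$ when $\alpha_\sigma=0$ and odd when $\alpha_\sigma=1$, and likewise for $g$ with $\beta_\sigma$. This is the convolution analogue of the pointwise parity bookkeeping used for the product in the preceding lemma.

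The key step is the substitution $\eta\mapsto R_\sigma\eta$ in the convolution integral, whose Jacobian equals $1$ since $R_\sigma$ is an isometry of $\RR$ that preserves Lebesgue measure. Starting from $(f*g)(R_\sigma\xi)=\int f(R_\sigma\xi-\eta)\,g(\eta)\wrt\eta$ and applying the substitution, I obtain $\int f\big(R_\sigma(\xi-\eta)\big)\,g(R_\sigma\eta)\wrt\eta$; inserting the two parity factors then yields $(f*g)(R_\sigma\xi)=(-1)^{\alpha_\sigma+\beta_\sigma}(f*g)(\xi)$. Hence $f*g$ has parity $\alpha_\sigma+\beta_\sigma$ modulo $2$ in the variable $\xi_\sigma$, which is exactly the parity of the monomial $\xi^{\alpha+\beta}$ in that variable.

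Running this argument over $\sigma=1,2,3$ shows that $f*g$ matches the parity pattern of $\xi^{\alpha+\beta}$ in every coordinate, giving $T(f*g)=T\xi^{\alpha+\beta}$ as claimed. The only point requiring care is that the reflection $R_\sigma$ must be applied simultaneously to the argument of $f$ and to the integration variable $\eta$, so that a single change of variables produces both sign factors $(-1)^{\alpha_\sigma}$ and $(-1)^{\beta_\sigma}$ at once; beyond this observation the computation is entirely routine.
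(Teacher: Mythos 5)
Your proof is correct and follows essentially the same route as the paper: a reflection of one coordinate combined with the measure-preserving change of variables $\eta\mapsto R_\sigma\eta$ in the convolution integral, transferring the parities of $f$ and $g$ to $f*g$. The only difference is presentational --- you carry the sign factors $(-1)^{\alpha_\sigma}$, $(-1)^{\beta_\sigma}$ so that all parity combinations are handled at once, whereas the paper writes out only the even--even case in the first variable and declares the remaining cases similar.
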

\begin{proof}
We consider only the axi-symmetry for the first coordinate axis, the other case can be proved in a similar way.
That is to say, if $f(\xi_1, \xi_2,\xi_3)=f(-\xi_1, \xi_2, \xi_3)$ and $g(\xi_1, \xi_2,\xi_3)=g(-\xi_1, \xi_2, \xi_3)$, then
$f\ast g(\xi_1, \xi_2,\xi_3)=f\ast g(-\xi_1, \xi_2, \xi_3)$.
$$\begin{array}{rcl}
f\ast g(-\xi_1, \xi_2, \xi_3) &=& \int^{\infty}_{-\infty} \int_{\mathbb{R}^2} f(-\xi_1-\eta_1, \xi_2-\eta_2, \xi_3-\eta_3) g(\eta_1,\eta_2,\eta_3) d\eta_1 d\eta_2 d\eta_3\\
{\mbox { symmetry of }} g &=& \int^{\infty}_{-\infty} \int_{\mathbb{R}^2} f(-\xi_1-\eta_1, \xi_2-\eta_2, \xi_3-\eta_3) g(-\eta_1,\eta_2,\eta_3) d\eta_1 d\eta_2 d\eta_3 \\
-\eta_1\rightarrow \eta_1 &=& \int^{\infty}_{-\infty} \int_{\mathbb{R}^2} f(-\xi_1+\eta_1, \xi_2-\eta_2, \xi_3-\eta_3) g(\eta_1,\eta_2,\eta_3) d\eta_1 d\eta_2 d\eta_3\\
{\mbox{ symmetry of }} f &=& \int^{\infty}_{-\infty} \int_{\mathbb{R}^2} f(\xi_1-\eta_1, \xi_2-\eta_2, \xi_3-\eta_3) g(\eta_1,\eta_2,\eta_3) d\eta_1 d\eta_2 d\eta_3 \\
&=&f\ast g(\xi_1, \xi_2, \xi_3).
\end{array} $$
\end{proof}

We get down to the second main result on symmetry.

\begin{theorem} \label{S2}
If $u^{0}$ satisfies (\ref{X2}), then $\forall \tau\geq 0$, $u^{\tau+1}$ satisfies (\ref{X2}).
\end{theorem}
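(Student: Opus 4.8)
The plan is to argue by induction on $\tau$, using the recursion (\ref{eq:5.25}) together with the two lemmas proved just above, which state that both pointwise multiplication and convolution of real functions add the symmetry exponents modulo $2$. The base case is exactly the hypothesis that $u^0$ satisfies (\ref{X2}). For the inductive step I assume $u^\tau$ satisfies (\ref{X2}) and show that each of the three terms on the right-hand side of (\ref{eq:5.25}) carries the symmetry type prescribed by (\ref{X2}); their sum then does too. Throughout, for a complex function $f=a+ib$ I record its type as $Tf=T\xi^\alpha+iT\xi^\beta$, and I note the three elementary operations: multiplying by the real even factors $e^{-t\xi^2}$, $e^{-(t-s)\xi^2}$, $|\xi|^{-2}$ (type $(0,0,0)$) and integrating in $s$ leave the type unchanged; multiplying by $i$ interchanges the exponents $\alpha$ and $\beta$; and multiplying by the monomial $\xi_l$ shifts both exponents by $\epsilon_l$, the $l$-th standard basis vector of $\{0,1\}^3$.

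Encoding (\ref{X2}), I write $T\hat u_k^\tau=T\xi^{\alpha^{(k)}}+iT\xi^{\beta^{(k)}}$ with $\alpha^{(1)}=(0,1,1)$, $\alpha^{(2)}=(1,0,1)$, $\alpha^{(3)}=(1,1,0)$ and $\beta^{(k)}=\epsilon_k$ (the imaginary part of component $k$ has the symmetry of $\xi_k$). The whole argument rests on two arithmetic identities, checked component by component: $\alpha^{(l)}+\epsilon_l\equiv(1,1,1)$ and $\beta^{(l)}=\epsilon_l$ for every $l$, from which $\alpha^{(l)}+\beta^{(l)}\equiv(1,1,1)$ follows for all $l$. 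This last equality means all three components lie in one common parity class, so the convolution lemma applies to any pair: the real and imaginary parts of $\hat u_l^\tau*\hat u_k^\tau$ are each symmetry-homogeneous, and $T(\hat u_l^\tau*\hat u_k^\tau)=T\xi^{\alpha^{(l)}+\alpha^{(k)}}+iT\xi^{\alpha^{(l)}+\beta^{(k)}}$.

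Next I assemble $A_k$ and $A_0$. Multiplying $\hat u_l^\tau*\hat u_k^\tau$ by $\xi_l$ shifts both exponents by $\epsilon_l$, and the identity $\alpha^{(l)}+\epsilon_l\equiv(1,1,1)$ makes the resulting type independent of $l$; hence the sum $A_k=\sum_l\xi_l(\hat u_l^\tau*\hat u_k^\tau)$ is symmetry-homogeneous, with $T(A_k)=T\xi^{(1,1,1)+\alpha^{(k)}}+iT\xi^{(1,1,1)+\beta^{(k)}}$. The same bookkeeping applied to $A_0=\sum_{l,l'}\xi_l\xi_{l'}(\hat u_l^\tau*\hat u_{l'}^\tau)$ collapses every summand, using $\alpha^{(l)}+\epsilon_l\equiv(1,1,1)$ and $\beta^{(l')}+\epsilon_{l'}\equiv(0,0,0)$, to the single type $T(A_0)=T\xi^{(0,0,0)}+iT\xi^{(1,1,1)}$. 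Now I propagate these through (\ref{eq:5.25}): the second term $i\int_0^t e^{-(t-s)\xi^2}A_k\,ds$ has type $T\xi^{(1,1,1)+\beta^{(k)}}+iT\xi^{(1,1,1)+\alpha^{(k)}}$ (the swap coming from the factor $i$), which equals $T\xi^{\alpha^{(k)}}+iT\xi^{\beta^{(k)}}$ precisely because $\alpha^{(k)}+\beta^{(k)}\equiv(1,1,1)$; the third term $-\frac{i\xi_k}{\xi^2}\int_0^t e^{-(t-s)\xi^2}A_0\,ds$ picks up a shift by $\epsilon_k$ and a swap, giving type $T\xi^{(1,1,1)+\epsilon_k}+iT\xi^{\epsilon_k}$, which again equals $T\xi^{\alpha^{(k)}}+iT\xi^{\beta^{(k)}}$ by the two identities; and the linear term $e^{-t\xi^2}\hat u_{0,k}$ already has this type. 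Thus all three terms agree, $\hat u_k^{\tau+1}$ has type $T\xi^{\alpha^{(k)}}+iT\xi^{\beta^{(k)}}$ for each $k$, and $u^{\tau+1}$ satisfies (\ref{X2}).

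The step I expect to require the most care is not the final reassembly but the claim that the sums defining $A_k$ and $A_0$ are symmetry-homogeneous: a priori distinct summands carry distinct types and the sum has no symmetry at all. It is exactly the identities $\alpha^{(l)}+\epsilon_l\equiv(1,1,1)$ and $\beta^{(l)}=\epsilon_l$ -- the algebraic signature that singles out $X_2$ among the admissible types -- that force every summand into one common type, which the factor $i$ and the multiplier $\xi_k$ then rotate back onto the $X_2$ type. I would therefore display these exponent computations explicitly, component by component, so that the cancellation underlying the heritability of (\ref{X2}) is transparent.
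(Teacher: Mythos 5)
Your proposal is correct and follows essentially the same route as the paper: induction on $\tau$, the two lemmas on products and convolutions of symmetry-homogeneous functions, and a term-by-term check that every summand of $A_k$ and $A_0$ lands in one common symmetry class which the factors $i$ and $\xi_k/|\xi|^2$ then rotate back onto the $X_2$ type. The paper carries out the same verification by writing each $T(A_{k,1})$, $T(A_{k,2})$, $T(A_{0,j})$ explicitly, whereas you compress it into the exponent identities $\alpha^{(l)}+\epsilon_l\equiv(1,1,1)$ and $\beta^{(l)}=\epsilon_l$; this is only a difference of bookkeeping, not of substance.
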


\begin{remark} \label{rem:2.7}
Non-linearity may change the symmetry property.
Many kinds of symmetry of complex vector field
can not be preserved in the iterative algorithm (\ref{2.6}).
For example,
a component has three anti-symmetry. Denote \\
$$T\hat{u}_0= \left(\begin{array}{c} T\xi_2\\
T\xi_1\\ T(\xi_1\xi_2\xi_3)\end{array}\right)$$
We can prove that $u^{0}$ and $B(u^{0}, u^{0})$ have different symmetry property.
\end{remark}

Before the proof of Theorem \ref{S2}, we introduce some notations and a lemma.
$\forall k=1,2,3$, denote $u_{k}^{\tau}= a_{k}^{\tau}+i b_{k}^{\tau}$, then the real part of $ A_{k}(\hat{u}^{\tau}, \hat{u}^{\tau})(\xi)$ can be written as:
$$ A_{k,1}(\hat{u}^{\tau}, \hat{u}^{\tau})(\xi)= \sum\limits_{l} \xi_{l} \{ a_l^{\tau}\ast b_k^{\tau} + b_{l}^{\tau}\ast a^{\tau}_{k}\}.$$
The imaginary part of $ A_{k}(\hat{u}^{\tau}, \hat{u}^{\tau})(\xi)$ can be written as:
$$ A_{k,2}(\hat{u}^{\tau}, \hat{u}^{\tau})(\xi)= \sum\limits_{l} \xi_{l} \{a_l^{\tau}\ast a_k^{\tau} - b_{l}^{\tau}\ast b^{\tau}_{k}\}.$$
The real part of $ A_{0}(\hat{u}^{\tau}, \hat{u}^{\tau})(\xi)$ can be written as the sum of the following two terms:
$$ A_{0,1}(\hat{u}^{\tau}, \hat{u}^{\tau})(\xi)= \sum\limits_{l}\sum\limits_{l'} \xi_{l} \xi_{l'} a_l^{\tau}\ast b_{l'}^{\tau}  $$ and
$$ A_{0,2}(\hat{u}^{\tau}, \hat{u}^{\tau})(\xi)= \sum\limits_{l} \sum\limits_{l'} \xi_{l} \xi_{l'}  b_{l}^{\tau} \ast a_{l'}^{\tau} .$$
The imaginary part of $ A_{k}(\hat{u}^{\tau}, \hat{u}^{\tau})(\xi)$ can be written as the sum of the following two terms:
$$ A_{0,3}(\hat{u}^{\tau}, \hat{u}^{\tau})(\xi)= \sum\limits_{l}\sum\limits_{l'} \xi_{l} \xi_{l'} a_l^{\tau}\ast a_{l'}^{\tau}  $$ and
$$ A_{0,4}(\hat{u}^{\tau}, \hat{u}^{\tau})(\xi)= - \sum\limits_{l} \sum\limits_{l'} \xi_{l} \xi_{l'}  b_{l}^{\tau} \ast b_{l'}^{\tau} .$$

Hence we can write the  equation (\ref{2.6}) as follows:
\begin{equation}
\begin{array}{rl}
\hat{u}_k^{\tau+1}(t,\xi)  = & e^{-t\xi^2} \hat{u}_{0,k}(\xi)- \int^{t}_{0} e^{-(t-s)\xi^2}  A_{k,1}(\hat{u}^{\tau}, \hat{u}^{\tau})(\xi)  ds\\
&+ i \int^{t}_{0} e^{-(t-s)\xi^2}  A_{k,2}(\hat{u}^{\tau}, \hat{u}^{\tau})(\xi) ds\\
& -\frac{\xi_k}{\xi^2} \int^{t}_{0} e^{-(t-s)\xi^2} \{ A_{0,1}(\hat{u}^{\tau}, \hat{u}^{\tau})(\xi)+  A_{0,2}(\hat{u}^{\tau}, \hat{u}^{\tau})(\xi)\} ds\\
&+\frac{i\xi_k}{\xi^2} \int^{t}_{0} e^{-(t-s)\xi^2} \{ A_{0,3}(\hat{u}^{\tau}, \hat{u}^{\tau})(\xi)+  A_{0,4}(\hat{u}^{\tau}, \hat{u}^{\tau})(\xi)\} ds.
\end{array}
\end{equation}

We prove first
\begin{lemma} \label{A0}
$$TA_0(\hat{u}^{\tau},\hat{u}^{\tau}) = T(\xi_1\xi_2\xi_3) + i T1.$$
\end{lemma}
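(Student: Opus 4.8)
The plan is to reduce the whole statement to additive bookkeeping of symmetry multi-indices, using the two lemmas just established: for real functions the convolution satisfies $T(f\ast g)=T\xi^{\alpha+\beta}$ and the product satisfies $T(fg)=T\xi^{\alpha+\beta}$, with exponents added and reduced modulo $2$. Combining them, a single summand $\xi_l\xi_{l'}(f\ast g)$ has symmetry type $T\xi_l+T\xi_{l'}+Tf+Tg$ (mod $2$). So after reading off the types of the six real scalar fields $a_l,b_l$ from the hypothesis $X_2$, the entire computation is just addition of multi-indices, and the $-1$ in $A_{0,4}$ is irrelevant since scaling by a real constant does not change symmetry type.

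First I would record the types forced by \eqref{X2}. Writing $u_l^\tau=a_l^\tau+ib_l^\tau$, the property $X_2$ gives $Ta_1=T(\xi_2\xi_3)$, $Ta_2=T(\xi_1\xi_3)$, $Ta_3=T(\xi_1\xi_2)$ and $Tb_l=T\xi_l$ for $l=1,2,3$. The crucial observation, which is precisely what makes $A_0$ symmetric at all, is the pair of cancellations valid for every $l$:
\[
T\xi_l+Ta_l=T(\xi_1\xi_2\xi_3),\qquad T\xi_l+Tb_l=T1.
\]
The first holds because $Ta_l$ is carried by the two variables other than $\xi_l$, so reinstating $T\xi_l$ completes the full monomial $\xi_1\xi_2\xi_3$; the second holds because $Tb_l=T\xi_l$ and $T\xi_l+T\xi_l\equiv T1$ modulo $2$.

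Next I would feed these identities into the four sub-sums. For $A_{0,1}=\sum_{l,l'}\xi_l\xi_{l'}\,a_l\ast b_{l'}$, each summand has type $(T\xi_l+Ta_l)+(T\xi_{l'}+Tb_{l'})=T(\xi_1\xi_2\xi_3)+T1=T(\xi_1\xi_2\xi_3)$, and every summand of $A_{0,2}$ likewise has type $T1+T(\xi_1\xi_2\xi_3)=T(\xi_1\xi_2\xi_3)$. For $A_{0,3}=\sum_{l,l'}\xi_l\xi_{l'}\,a_l\ast a_{l'}$ each summand has type $T(\xi_1\xi_2\xi_3)+T(\xi_1\xi_2\xi_3)=T1$, and for $A_{0,4}$ each summand has type $T1+T1=T1$. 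Since within each group all summands carry one common symmetry type, the sums are genuinely symmetric: the real part $A_{0,1}+A_{0,2}$ has type $T(\xi_1\xi_2\xi_3)$ and the imaginary part $A_{0,3}+A_{0,4}$ has type $T1$, which is exactly $TA_0=T(\xi_1\xi_2\xi_3)+iT1$.

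The argument is purely combinatorial, so the main obstacle is hygiene rather than analytic difficulty. I must check that all summands inside each of the real and imaginary parts really do share a single symmetry type, since otherwise, as the earlier discussion stresses, the symmetry of the sum is undefined and the statement would be vacuous; and I must respect the paper's labeling of the real and imaginary parts of $A_0$ so that the two pieces land on the correct sides of $T(\xi_1\xi_2\xi_3)+iT1$. Both points are settled by the two cancellation identities, which pin every summand of the real part to the uniform type $(1,1,1)$ and every summand of the imaginary part to $(0,0,0)$.
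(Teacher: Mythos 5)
Your proof is correct and follows essentially the same route as the paper: both reduce the claim to additive bookkeeping of symmetry multi-indices via the product and convolution lemmas and verify that every summand of $A_{0,1},A_{0,2}$ has type $T(\xi_1\xi_2\xi_3)$ and every summand of $A_{0,3},A_{0,4}$ has type $T1$. The only difference is presentational: the paper writes out all six summands of each sub-sum explicitly, whereas you package the computation into the two cancellation identities $T\xi_l+Ta_l=T(\xi_1\xi_2\xi_3)$ and $T\xi_l+Tb_l=T1$, which is a cleaner way to see the same arithmetic.
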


\begin{proof}
There exists four term in $A_0(\hat{u}^{\tau},\hat{u}^{\tau})$. We consider first two real part term:
$$\begin{array}{rl}
&T(A_{0,1}(\xi))\\
=& T\xi_{1}^2 T(\xi_2\xi_3) T\xi_1 + T\xi_2^{2} T(\xi_1 \xi_3) T\xi_2 + T\xi_{3}^2 T(\xi_1 \xi_2) T\xi_3 + T(\xi_1 \xi_2) T(\xi_2 \xi_3) T\xi_2\\
+& T(\xi_1\xi_3) T(\xi_2 \xi_3)T\xi_3 + T(\xi_2\xi_3)T(\xi_1\xi_3)T\xi_3\\
=& T(\xi_1\xi_2\xi_3).
\end{array}$$
And,
$$\begin{array}{rl}
&T(A_{0,2}(\xi))\\
=& T\xi_{1}^2 T(\xi_1) T (\xi_2 \xi_3)  + T\xi_2^{2} T\xi_2 T(\xi_1\xi_3) + T\xi_{3}^2 T\xi_3 T(\xi_1\xi_2) + T(\xi_1 \xi_2) T\xi_1 T(\xi_1\xi_3)\\
+& T(\xi_1\xi_3) T\xi_1 T(\xi_1\xi_2) + T(\xi_2\xi_3)T \xi_3 T(\xi_1\xi_2)\\
=& T(\xi_1\xi_2\xi_3).
\end{array}$$

Further, we consider the two imaginary part term:
$$\begin{array}{rl}
&T(A_{0,3}(\xi))\\
=& T\xi_{1}^2 T(\xi_2\xi_3) T(\xi_2\xi_3) + T\xi_2^{2} T(\xi_1 \xi_3) T(\xi_1\xi_3) + T\xi_{3}^2 T(\xi_1 \xi_2) T(\xi_1\xi_2)\\
+ & T(\xi_1 \xi_2) T(\xi_2 \xi_3) T(\xi_2\xi_3) + T(\xi_1\xi_3) T(\xi_2 \xi_3)T(\xi_1\xi_2) + T(\xi_2\xi_3)T(\xi_1\xi_3)T(\xi_1\xi_2)\\
=& T1.
\end{array}$$
and
$$\begin{array}{rl}
&T(A_{0,4}(\xi))\\
=& T\xi_{1}^2 T(\xi_1) T \xi_1  + T\xi_2^{2} T\xi_2 T\xi_2 + T\xi_{3}^2 T\xi_3 T\xi_3\\
+ & T(\xi_1 \xi_2) T\xi_1 T\xi_2 + T(\xi_1\xi_3) T\xi_1 T\xi_3 + T(\xi_2\xi_3)T \xi_2 T\xi_3\\
=& T1.
\end{array}$$
\end{proof}

Now we come to prove the main theorem of this subsection.
\begin{proof} First, we have
\begin{equation}\label{111}
\begin{array}{rl}
&T(A_{1,1}(\xi))\\
=& T\xi_{1} \{T(\xi_2 \xi_3) T\xi_1 +  T\xi_1 T(\xi_2\xi_3) \} + T\xi_2 \{T(\xi_1\xi_3) T\xi_1 +T \xi_2 T(\xi_2\xi_3)\}\\
&+ T\xi_3 \{T(\xi_1\xi_2) T\xi_1+ T\xi_3 T(\xi_2\xi_3)\}\\
= &T(\xi_2 \xi_3).
\end{array}\end{equation}
and
\begin{equation}\label{112}
\begin{array}{rl}
&T(A_{2,1}(\xi))\\
=& T\xi_{1} \{T(\xi_2 \xi_3) T\xi_2 +  T\xi_1 T(\xi_1\xi_3) \} + T\xi_2 \{T(\xi_1\xi_3) T\xi_2 +T \xi_2 T(\xi_1\xi_3)\}\\
&+ T\xi_3 \{T(\xi_1\xi_2) T\xi_2+ T\xi_3 T(\xi_1\xi_3)\}\\
= &T(\xi_1 \xi_3).
\end{array}
\end{equation}
Further, by divergence zero property, we have
\begin{equation}\label{113}
T(A_{3,1}(\xi))= T(\xi_1 \xi_2).
\end{equation}

Secondly, we have
\begin{equation}\label{121}\begin{array}{rl}
&T(A_{1,2}(\xi))\\
=& T\xi_{1} \{T(\xi_2 \xi_3) T(\xi_2\xi_3) +  T\xi_1 T \xi_1) \} + T\xi_2 \{T(\xi_1\xi_3) T(\xi_2\xi_3) +T \xi_2 T\xi_1)\}\\
&+ T\xi_3 \{T(\xi_1\xi_2) T(\xi_2\xi_3)+ T\xi_3 T\xi_1 \}\\
= & T\xi_1
\end{array}
\end{equation}
and
\begin{equation}\label{122}
\begin{array}{rl}
&T(A_{2,2}(\xi))\\
=& T\xi_{1} \{T(\xi_2 \xi_3) T(\xi_1\xi_3) +  T\xi_1 T \xi_3) \} + T\xi_2 \{T(\xi_1\xi_3) T(\xi_1\xi_3) +T \xi_2 T\xi_2)\}\\
&+ T\xi_3 \{T(\xi_1\xi_2) T(\xi_1\xi_3)+ T\xi_3 T\xi_2 \}\\
= &T\xi_2.
\end{array}
\end{equation}
Further, by divergence zero property, we have
\begin{equation}\label{123}
T(A_{3,2}(\xi))= T\xi_3.
\end{equation}

Together the above symmetry (\ref{111}, \ref {112}, \ref{113}, \ref{121}, \ref{122}, \ref{123}),  with the symmetry in the lemma \ref{A0}, we get the symmetry property of Theorem \ref{S2}.

\end{proof}


\subsection{Reduction of iteration function}
For a complex initial data,
the iterative algorithm (\ref{2.6}) is an iteration of four real functions.
By Theorems \ref{S1} and \ref{S2}, if we consider the symmetry property,
the number of iterative function will be reduced.

\begin{cor} \label{cro2.1}
(i) If the initial value $u_0$ satisfies symmetry property (\ref{X1}), then the iterative algorithm (\ref{2.6}) can be reduce to the iteration of a complex function.

(ii) If the initial value $u_0$ satisfies symmetry property (\ref{X2}) and the real part of $\hat{u}_{0}$ is zero, then the iterative algorithm (\ref{2.6}) can be reduce to the iteration of two real functions.

(iii) If the initial value $u_0$ satisfies symmetry properties (\ref{X1})  and (\ref{X2}) and the real part of $\hat{u}_{0}$ is zero, then the iterative algorithm (\ref{2.6}) can be reduce to the iteration of one real functions.

\end{cor}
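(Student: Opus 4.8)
The plan is to prove Corollary \ref{cro2.1} by counting independent real degrees of freedom in the iterative algorithm (\ref{2.6}) once the symmetry constraints are imposed. Recall that for complex initial data, each step of (\ref{2.6}) is naturally an iteration of four real functions, namely the real and imaginary parts $a_k^\tau,b_k^\tau$ of the three components $\hat u_k^\tau=a_k^\tau+ib_k^\tau$ subject to the divergence-zero relation (\ref{ediv}), which already reduces three components to two free ones. The three parts of the Corollary then follow by exhibiting, for each symmetry hypothesis, how many of these free real functions are mutually determined.

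For part (i), I would invoke Theorem \ref{S1}: the heritability of (\ref{X1}) means that at every iteration the recursion preserves $\hat u_1^\tau(\xi_1,\xi_2,\xi_3)=\hat u_2^\tau(\xi_2,\xi_1,\xi_3)$. Thus $\hat u_2^\tau$ is completely determined by $\hat u_1^\tau$ through the coordinate swap, and $\hat u_3^\tau$ is determined from $\hat u_1^\tau,\hat u_2^\tau$ via (\ref{ediv}). Hence only the single complex function $\hat u_1^\tau$ needs to be iterated, giving the claimed reduction to the iteration of a complex function. The key point to verify is that the coordinate-swap relation is genuinely inherited by (\ref{2.6}), which is exactly the content of Theorem \ref{S1}, so this part is essentially immediate.

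For part (ii), I would use Theorem \ref{S2} together with the hypothesis that the real part of $\hat u_0$ is zero. By Theorem \ref{S2}, symmetry (\ref{X2}) is preserved, so $T(\hat u^\tau)$ retains the polynomial symmetry pattern (\ref{X2}) for all $\tau$. The structure of (\ref{X2}) shows that the real part of each component has the symmetry of $\xi_i\xi_j$ while the imaginary part has the symmetry of $\xi_k$; I would check that if $a_k^0\equiv 0$ initially, then the recursion (\ref{2.6}), rewritten through $A_{k,1},A_{k,2},A_{0,1},\dots,A_{0,4}$, preserves $a_k^\tau\equiv 0$ at each step—because the terms that would feed a nonzero real part are precisely those carrying the wrong symmetry type and must vanish. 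Once the real parts stay zero, the genuine unknowns are the three imaginary parts $b_k^\tau$, and the divergence constraint (\ref{ediv}) removes one of them, leaving two real functions to iterate. Part (iii) then combines both reductions: symmetry (\ref{X1}) ties $b_2^\tau$ to $b_1^\tau$ by the swap and (\ref{ediv}) determines $b_3^\tau$, so with zero real part only a single real function remains.

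The main obstacle I expect is part (ii): one must confirm carefully that the hypothesis ``real part of $\hat u_0$ is zero'' is actually \emph{propagated} by the nonlinearity, not merely assumed at $\tau=0$. This requires checking that the bilinear terms producing a real contribution to $\hat u_k^{\tau+1}$ all vanish when every $a_k^\tau=0$; concretely, with $a_k^\tau\equiv 0$ the terms $A_{k,1}$ (which involve products $a_l^\tau\ast b_k^\tau+b_l^\tau\ast a_k^\tau$) and $A_{0,1},A_{0,2}$ (which involve $a\ast b$) vanish identically, while $A_{k,2}$ and $A_{0,3},A_{0,4}$ survive and feed only the imaginary parts. Verifying this vanishing against the symmetry bookkeeping of Lemma \ref{A0} and equations (\ref{111})--(\ref{123}) is the delicate step; the rest is a straightforward counting of free functions modulo the divergence relation (\ref{ediv}) and the swap relation (\ref{X1}).
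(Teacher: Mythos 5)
Your proposal is correct and follows exactly the route the paper intends: the paper states this corollary without proof, presenting it merely as a consequence of Theorems \ref{S1} and \ref{S2} together with the divergence relation (\ref{ediv}), and your degree-of-freedom count plus the explicit check that $a_k^{\tau}\equiv 0$ propagates (since $A_{k,1}$, $A_{0,1}$, $A_{0,2}$ are the only terms feeding the real part of $\hat{u}_k^{\tau+1}$ and each contains a factor $a^{\tau}$) supplies precisely the details the paper omits. One sentence of yours is misleading --- those terms vanish because $a^{\tau}=0$, not because they ``carry the wrong symmetry type'' --- but your final paragraph gives the correct reason, so the argument stands.
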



\section{Proof of the main theorem}
\setcounter{equation}{0}

The proof of the main theorem is the corollary of the five Theorems \ref{th:1}, \ref{th:3.2}, \ref{th:2}, \ref{S1} and \ref{S2}.
In fact, we have
\begin{proof}
We prove the main theorem \ref{th1.3} by considering the iterative process (\ref{2.6}).
We search functions with symmetry property $X_{m}$ and uniform analyticity property to approach the solution.
First, if $u_0$ is a function with symmetry property $X_{m}$, then $e^{t\Delta}u_0$ is a function with symmetry property $X_{m}$ and
with uniform analyticity property.
According to theorems \ref{S1} and \ref{S2}, $u^{\tau}(\tau\geq 1)$ are functions with symmetry property $X_{m}$.
According to theorems \ref{th:1}, \ref{th:3.2} and \ref{th:2},
$u^{\tau}(\tau\geq 1)$ are functions with uniform analyticity property.

Since $u_0$ is a small initial value in $(P^{\alpha}_{p,q})^{3}$, by theorems \ref{th:1}, \ref{th:3.2} and \ref{th:2},
$u^{\tau}$ convergence to the solution $u(t,x)$ and $u(t,x)$ is a function with symmetry property $X_{m}$ and with uniform analyticity property.

\end{proof}

\end{document}